\documentclass[reqno]{amsart}
\usepackage{amsfonts}
\usepackage{graphicx}
\usepackage{amscd}
\usepackage{amsmath}
\usepackage{amssymb}
\usepackage{latexsym}
\usepackage{hyperref}
\usepackage[all]{xy}
\usepackage[latin1]{inputenc}
\setcounter{MaxMatrixCols}{30}
\theoremstyle{plain}

\newtheorem{corollary}{\bf Corollary}

\newtheorem{example}{\bf Example}
\newtheorem{lemma}{\bf Lemma}

\newtheorem{proposition}{\bf Proposition}
\newtheorem{remark}{Remark}

\newtheorem{theorem}{\bf Theorem}
\numberwithin{equation}{section}

\newcommand\dv{\mathrm{div}}

\begin{document}

\title[On the construction of gradient Ricci soliton warped product]{On the construction of gradient Ricci soliton warped product}

\author{F.E.S. Feitosa, A.A. Freitas Filho}
\address{(F.E.S. Feitosa) Departamento de Matem\'atica-ICE-UFAM, 69080-900, Manaus-AM-BR}
\email{feitosaufam@gmail.com}

\address{(A.A. Freitas Filho) Departamento de Matem\'atica-ICE-UFAM, 69080-900, Manaus-AM-BR}
\email{freitas.airtonfilho@hotmail.com}

\author{J.N.V. Gomes}
\address{(J.N.V. Gomes) Departamento de Matem\'atica-ICE-UFAM, 69080-900, Manaus-AM-BR}
\email{jnvgomes@pq.cnpq.br}
\urladdr{http://www.ufam.edu.br}

\keywords{Ricci soliton; Warped product; Scalar curvature; Rigidity results}

\subjclass[2010]{Primary 53C24, 53C25; Secondary 53C15, 53C21}

\begin{abstract}
In this paper we show that an expanding or steady gradient Ricci soliton warped product $B^n\times_f F^m$, $m>1$, whose warping function $f$ reaches both maximum and minimum must be a Riemannian product. Moreover, we present a necessary and sufficient condition for constructing a gradient Ricci soliton warped product. As an application, we present a class of expanding Ricci soliton warped product having as a fiber an Einstein manifold with non-positive scalar curvature. We also discuss some obstructions to this construction, especially in the case when the base of the warped product is compact.
\end{abstract}
\maketitle

\section{Introduction}

The study of warped products have been of great interest throughout the recent years. This concept was first introduced by Bishop and O'Neill as they succeeded  to give examples of complete Riemannian manifolds with negative sectional curvature \cite{BO}. Given two Riemannian manifolds $(B,g_{B})$ and $(F,g_{F})$ as well as a positive smooth function $f$ on $B$, we define on the product manifold $B\times F$ the metric
\begin{equation}\label{WPm}
g=\pi^{*}g_{B}+(f\circ \pi)^{2}\sigma^{*}g_{ F},
\end{equation}
where $\pi$ and $\sigma$ are the natural projections on $B$ and $F$, respectively. Under these conditions the product manifold is said to be the {\it warped product} of $B$ and $F$; it is denoted $M=B\times_{f} F$ and the function $f$ is called the \textit{warping function}. Notice that when $f$ is constant  $M$ is just the usual  Riemannian product. Albeit the class of warped products with non-constant warping functions provides a rich class of examples in Riemannian geometry, it was shown by Kim-Kim \cite{kim} that there does not exist a compact Einstein warped product  with non-constant warping function if the scalar curvature is non-positive. Moreover, they observed that a necessary condition for a warped product be an Einstein manifold is its base be a quasi-Einstein metric, i.e., a Riemannian manifold whose modified Bakry-Emery Ricci tensor is a constant multiple of the metric tensor. One should point out that some examples of expanding quasi-Einstein manifolds having as a fiber an arbitrary Einstein manifold as well as steady quasi-Einstein manifolds with fiber of non-negative scalar curvature were constructed in \cite {Besse}. More recently, Barros-Batista-Ribeiro \cite{BBR} provided some volume estimates for Einstein warped products similar to a classical result due to Calabi \cite{Calabi} and Yau \cite{Yau} for complete Riemannian manifolds with non-negative Ricci curvature. For this, they made use of the approach of quasi-Einstein manifolds. In particular, they also presented an obstruction for the existence of such a class of manifolds. We would like to mention here the work of He-Petersen-Wylie \cite{hepeterwylie} concerning warped product Einstein manifolds. Being an extension of the work of Case-Shu-Wei \cite{case1} and some earlier work of Kim-Kim \cite{kim}, the upshot of \cite{hepeterwylie} is that the base may have non-empty boundary.

A natural generalisation of the Einstein manifolds are the Ricci Solitons. This concept was introduced by Hamilton \cite{hamilton1} in early $80's$. We recall that a Ricci soliton is a complete Riemannian manifold $M$ endowed with a metric $g$, a vector field $X\in\mathfrak{X}(M)$ and a constant $\lambda$ satisfying the equation
\begin{eqnarray} \label{Ric1}
Ric+\frac{1}{2}\mathcal{L}_{X}g=\lambda g.
\end{eqnarray}
We shall refer to this equation as the fundamental equation. A Ricci soliton is called {\it expanding}, {\it steady} or {\it shrinking} if $\lambda<0$, $\lambda=0$ or $\lambda>0$, respectively. When $X=\nabla\psi$ for some smooth function $\psi$ on $M$, we write $(M,g,\nabla\psi,\lambda)$ for the gradient Ricci soliton with potencial function $\psi$. In this case, the fundamental equation can be rewritten as
\begin{eqnarray} \label{Ric2}
Ric+\nabla^{2}\psi=\lambda g,
\end{eqnarray}
where $\nabla^{2}\psi$ denotes the Hessian of $\psi$. For more details see \cite{cao,hamilton1}. It has been known since the early 90's that a compact gradient steady or expanding Ricci soliton is necessarily an Einstein manifold  \cite{hamilton2,Ivey}. In \cite{peterneW}, Petersen and Wylie used a theorem due to Brinkmann \cite{Brinkmann} to show that any surface gradient Ricci soliton is a warped product. It is also known that Robert Bryant (see \cite{Bryant,ChowEtal}) constructed a steady Ricci soliton as the warped product $(0,+\infty) \times_f \Bbb{S}^{m}$, $m>1$, with a radial warping function $f$. Since this latter function is not limited we reach the following natural question:
{\it Under which conditions a warped product with a limited warping function is a Ricci soliton?} Our first theorem gives a partial answer to this question.

\begin{theorem}\label{mainthm}
Let $M=B^n\times_f F^m$ be a warped product and $\varphi$ a smooth function on $B$ so that $(M,g,\nabla\tilde\varphi,\lambda)$ be an expanding or steady gradient Ricci soliton. Assume that its fiber $F^m$ is of dimension at least two and that its warping function $f$ reaches both maximum and minimum. Then $M$ must be a Riemannian product.
\end{theorem}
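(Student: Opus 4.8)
The plan is to split the fundamental equation \eqref{Ric2} (with potential $\tilde\varphi=\varphi\circ\pi$) into its base and fiber components and to distil from it a single scalar equation on $B$ that the warping function must obey; the behaviour of that equation at a maximum and at a minimum of $f$ will then do the rest.

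First I would record the classical warped product formulas. For vectors $X,Y$ tangent to $B$ and $V,W$ tangent to $F$ (identified with their lifts) one has $Ric(X,Y)=Ric_B(X,Y)-\frac{m}{f}\nabla^{2}_B f(X,Y)$, $Ric(X,V)=0$, and
\begin{equation*}
Ric(V,W)=Ric_F(V,W)-\Big(\frac{\Delta_B f}{f}+(m-1)\frac{|\nabla_B f|^{2}}{f^{2}}\Big)g(V,W),
\end{equation*}
while, since $\tilde\varphi$ is the pull-back of a function on the base, $\nabla^{2}\tilde\varphi(X,Y)=\nabla^{2}_B\varphi(X,Y)$, $\nabla^{2}\tilde\varphi(X,V)=0$ and $\nabla^{2}\tilde\varphi(V,W)=\frac{g_B(\nabla_B f,\nabla_B\varphi)}{f}\,g(V,W)$. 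Substituting into \eqref{Ric2}, the mixed component is trivial, the horizontal component is an equation on $B$ that I will not need, and the vertical component reads
\begin{equation*}
Ric_F(V,W)=\Big(\lambda f^{2}+f\Delta_B f+(m-1)|\nabla_B f|^{2}-f\,g_B(\nabla_B f,\nabla_B\varphi)\Big)g_F(V,W).
\end{equation*}
Since the left side depends only on the point of $F$ and the coefficient on the right only on the point of $B$, that coefficient is a constant $\mu$, the fiber is Einstein with $Ric_F=\mu g_F$, and, writing $\Delta_\varphi=\Delta_B-g_B(\nabla_B\varphi,\nabla_B\,\cdot\,)$ for the drift Laplacian on $B$, the warping function satisfies
\begin{equation}\label{keyeq}
f\,\Delta_\varphi f+(m-1)|\nabla_B f|^{2}=\mu-\lambda f^{2}\qquad\text{on }B.
\end{equation}

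Next I would feed \eqref{keyeq} into the test function $f^{m}$: a short computation gives $\Delta_\varphi(f^{m})=m f^{m-2}(\mu-\lambda f^{2})$. Let $p$ be a maximum point and $q$ a minimum point of $f$, both of which exist by hypothesis; since $f>0$ and $m\ge 2$ the function $f^{m}$ has a maximum at $p$ and a minimum at $q$. At $p$ the gradient of $f^{m}$ vanishes and $f^{m}$ has a maximum, so $\Delta_\varphi(f^{m})(p)=\Delta_B(f^{m})(p)\le 0$, which forces $\mu\le\lambda f(p)^{2}$. Because $\lambda\le 0$ and $0<f(x)\le f(p)$ for every $x\in B$, it follows that $\lambda f(x)^{2}\ge\lambda f(p)^{2}\ge\mu$, hence $\Delta_\varphi(f^{m})=m f^{m-2}(\mu-\lambda f^{2})\le 0$ on all of $B$. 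So $f^{m}$ is a supersolution of the elliptic operator $\Delta_\varphi$ on $B$ and it attains its minimum at the interior point $q$; by Hopf's strong maximum principle $f^{m}$, and therefore $f$, must be constant, and then $g=g_B+f^{2}g_F$ exhibits $M$ as a Riemannian product.

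The step I expect to require the most care is the steady case $\lambda=0$: there the evaluations at $p$ and $q$ only yield $\mu=0$, which by itself does not make $f$ constant, so one genuinely needs the observation that \eqref{keyeq} turns $f$ (equivalently $f^{m}$) into a $\Delta_\varphi$-superharmonic function attaining an interior minimum and then the strong maximum principle. The expanding case is softer: evaluating at $p$ and at $q$ gives $\mu=\lambda f(p)^{2}=\lambda f(q)^{2}$, and since $\lambda<0$ this already forces $\max f=\min f$. A final small point worth stating is the topology of $B$: since $M=B\times F$ is connected and boundaryless so is $B$, which is exactly what lets the strong maximum principle be applied (and $f$ being bounded below by a positive constant moreover makes $B$ complete).
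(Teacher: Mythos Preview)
Your proof is correct and follows the same overall strategy as the paper --- extract from the vertical component of the soliton equation the scalar identity $\lambda f^{2}+f\Delta_\varphi f+(m-1)|\nabla f|^{2}=\mu$ with $\mu$ constant, then play off a maximum of $f$ against a minimum --- but you streamline two points. First, the paper deduces that $\mu$ is constant by a Bianchi-identity computation carried out purely on $B$ (Proposition~\ref{PP5}, which in turn relies on Hamilton's identity \eqref{Ham-C} through Proposition~\ref{Lem2}); your separation-of-variables observation (the left side of the fiber equation depends only on the fiber point, the coefficient on the right only on the base point) is quicker and bypasses that machinery. Second, the paper treats $\lambda<0$ and $\lambda=0$ separately --- the former by the inequality chain $0\ge f(p)\Delta f(p)=\mu-\lambda f(p)^{2}\ge\mu-\lambda f(q)^{2}=f(q)\Delta f(q)\ge0$, the latter by applying the strong maximum principle to $\Delta_\varphi f=(1-m)f^{-1}|\nabla f|^{2}\le0$ --- whereas your device of passing to $f^{m}$ and noting $\Delta_\varphi(f^{m})=m f^{m-2}(\mu-\lambda f^{2})\le0$ handles both cases at once with a single appeal to the strong maximum principle. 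The paper's route has the compensating advantage that Proposition~\ref{PP5} is stated intrinsically on $B$, independently of any fiber, so it also feeds into the converse construction of Theorem~\ref{CRSPW}.
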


This latter theorem is motivated by the ideas of \cite{kim} which concern compact Einstein warped product spaces with non-positive scalar curvature. We point out that Theorem \ref{mainthm} is a natural generalisation of the Einstein case to the Ricci soliton case without the compactness condition on the product that was taken in \cite{kim}. Incidentally, an interesting fact emerges when we study Ricci solitons that are realised as a warped product. Indeed, their bases satisfy the equation \eqref{EQMthmIntr} below. This is a generalisation of the Einstein metrics, which contains quasi-Einstein metrics (see p. 6).

The next result establishes a compactness criterion of shrinking gradient Ricci soliton warped product under the condition that the base is compact.

\begin{theorem}\label{thmCpt}
Let $M=B^n\times_f F^m$ be a warped product and $\varphi$ a smooth function on $B$ so that $(M,g,\nabla\tilde\varphi,\lambda)$ be a shrinking gradient Ricci soliton with compact base and fiber with dimension at least two. Then $M$ must be a compact manifold.
\end{theorem}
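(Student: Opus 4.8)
The plan is to write $\tilde\varphi=\varphi\circ\pi$, so that $\tilde\varphi$ is constant along the fibers of $M=B^{n}\times_f F^{m}$, and then to evaluate the fundamental equation \eqref{Ric2} on vectors tangent to the fiber. Using the O'Neill formulas for the Ricci curvature and the Levi-Civita connection of a warped product, one computes, for vertical fields $V,W$,
\begin{equation*}
Ric(V,W)=Ric_{F}(V,W)-\big(f\Delta f+(m-1)|\nabla f|^{2}\big)\,g_{F}(V,W),\qquad \nabla^{2}\tilde\varphi(V,W)=f\,\langle\nabla f,\nabla\varphi\rangle\,g_{F}(V,W),
\end{equation*}
where $\Delta$, $\nabla$ and $\langle\,,\,\rangle$ are taken on $B$ and $Ric_{F}$ is the Ricci tensor of $F$. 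Since $g(V,W)=f^{2}g_{F}(V,W)$, substituting into \eqref{Ric2} gives
\begin{equation}\label{fibereq}
Ric_{F}(V,W)=\Big(f\Delta f+(m-1)|\nabla f|^{2}-f\langle\nabla f,\nabla\varphi\rangle+\lambda f^{2}\Big)\,g_{F}(V,W).
\end{equation}
The left-hand side of \eqref{fibereq} depends only on the point of $F$ while the coefficient on the right depends only on the point of $B$, so both must equal a constant $\mu$. In particular $F$ is Einstein, $Ric_{F}=\mu g_{F}$, and along $B$
\begin{equation}\label{baseeq}
f\Delta f+(m-1)|\nabla f|^{2}-f\langle\nabla f,\nabla\varphi\rangle+\lambda f^{2}=\mu .
\end{equation}

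The next step is to determine the sign of $\mu$ using the compactness of $B$. Since $B$ is a compact manifold and $f>0$, the warping function attains a positive minimum at some $q\in B$, where $\nabla f(q)=0$ and $\Delta f(q)\ge 0$. Evaluating \eqref{baseeq} at $q$ yields
\begin{equation*}
\mu=f(q)\Delta f(q)+\lambda f(q)^{2}\ \ge\ \lambda f(q)^{2}\ >\ 0 ,
\end{equation*}
using $\lambda>0$. Thus $F$ is an Einstein manifold of dimension $m\ge 2$ with strictly positive Einstein constant. (This is exactly where the shrinking hypothesis and the condition $m>1$ enter: if $m=1$ then $Ric_{F}\equiv 0$, forcing $\mu=0$ and contradicting the strict inequality, so a shrinking warped-product soliton with a one-dimensional fiber cannot occur.)

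Finally I would pass from $F$ being a positive Einstein manifold to $F$ being compact. Since $(M,g)$ is complete — completeness being part of the definition of a Ricci soliton — and $M=B\times_f F$, the fiber $(F,g_{F})$ is itself complete: an inextendable unit-speed geodesic $\gamma\colon[0,a)\to F$ with $a<\infty$ would produce, for a fixed $b_{0}\in B$, the curve $t\mapsto(b_{0},\gamma(t))$ in $M$, which has finite $g$-length $f(b_{0})\,a$ and leaves every compact subset of $M$, contradicting the completeness of $M$. Now Myers' theorem, applied to the complete manifold $(F,g_{F})$ with $Ric_{F}=\mu g_{F}$ and $\mu>0$, shows that $F$ is compact; since $B$ is compact by hypothesis, $M=B\times F$ is compact, as claimed.

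I expect the only real computation to be in establishing \eqref{fibereq} — recording the warped-product Ricci identity and the Hessian of $\tilde\varphi$ correctly and combining them; everything after that is a one-line maximum-principle argument at a minimum of $f$ together with Myers' theorem. The one point I would treat carefully rather than merely cite is the implication ``$M$ complete $\Rightarrow$ $F$ complete'' for warped products, which is why I spelled it out above.
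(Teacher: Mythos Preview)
Your proof is correct and follows the same overall architecture as the paper --- derive the fiber equation $Ric_{F}=\mu g_{F}$, show $\mu>0$, then apply Bonnet--Myers --- but the step establishing $\mu>0$ is genuinely different. The paper integrates the identity $\mu=\lambda f^{2}+fLf+(m-1)|\nabla f|^{2}$ (where $L=\Delta-\nabla\varphi$) over the compact base against the weighted measure $e^{-\varphi}\,dB$; self-adjointness of $L$ turns $\int fLf\,e^{-\varphi}$ into $-\int|\nabla f|^{2}e^{-\varphi}$, yielding
\[
\mu\,\mathrm{vol}_{\varphi}(B)=\lambda\int_{B} f^{2}e^{-\varphi}\,dB+(m-2)\int_{B}|\nabla f|^{2}e^{-\varphi}\,dB>0.
\]
You instead evaluate the same identity pointwise at a minimum of $f$ and read off $\mu\ge\lambda f(q)^{2}>0$ directly. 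Your route is more elementary --- no integration by parts, no weighted volume --- and the positivity step does not rely on $m-2\ge 0$, whereas the paper's integral argument does; conversely, the paper's method produces an exact integral formula for $\mu$ that carries extra information. You are also more careful than the paper's main proof in justifying that $F$ inherits completeness from $M$ before invoking Myers, a point the paper leaves implicit (and only touches on, in the reverse direction, in a subsequent remark).
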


It should be emphasized that there are several further interesting obstructions to the existence of Einstein metrics. For more details about this subject see \cite{BBR,BRS,Besse,case2,kim}. According to this fact, it is natural to investigate the obstruction results for Ricci solitons that are realised as a warped product. Beyond  Theorems \ref{mainthm} and \ref{thmCpt}, we will prove some more in the course of this paper, as it can be seen in the remarks and corollaries.

We next obtain a necessary and sufficient condition for constructing a gradient Ricci soliton warped product. For this reason, we consider a Riemannian manifold $(B^{n},g_B)$  with two smooth functions $f>0$ and $\varphi$ satisfying
\begin{equation}\label{EQMthmIntr}
Ric+\nabla^{2}\varphi=\lambda g_B+\frac{m}{f}\nabla^{2}f
\end{equation}
and
\begin{equation}\label{EQMthmIntr2}
2\lambda\varphi-|\nabla\varphi|^{2}+\Delta\varphi+\frac{m}{f}\nabla\varphi(f)=c
\end{equation}
for some constants $m,c,\lambda\in\Bbb{R}$, with $m\neq0$. We will prove that $f$ and $\varphi$ satisfy
\begin{equation}\label{CMthmIntr}
\lambda f^{2}+f\Delta f + (m-1)|\nabla f|^{2} -f\nabla\varphi(f)=\mu
\end{equation}
for a constant $\mu\in\Bbb{R}$, cf. Proposition \ref{PP5}.

By taking $m$ to be an integer at least $2$ and using the Bishop and O'Neill formulas (cf. Lemmas \ref{oneil1} and \ref{oneil2}), we construct a gradient Ricci soliton warped product as follows.
\begin{theorem} \label{CRSPW}
Let $(B^{n},g_B)$ be a complete Riemannian manifold with two smooth functions $f>0$ and $\varphi$ satisfying \eqref{EQMthmIntr} and \eqref{EQMthmIntr2}. Take the constant $\mu$ satisfying \eqref{CMthmIntr} and a complete Riemannian manifold  $(F^m,g_F)$ with Ricci tensor $^F\!Ric=\mu g_F$ and $m>1$. Then $(B^n\times_fF^m,g,\nabla\tilde\varphi,\lambda)$ is a gradient Ricci soliton warped product.
\end{theorem}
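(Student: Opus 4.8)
The plan is to verify directly the fundamental equation \eqref{Ric2} for the triple $(g,\nabla\tilde\varphi,\lambda)$ on $M=B^{n}\times_{f}F^{m}$, where $\tilde\varphi=\varphi\circ\pi$ is the lift of $\varphi$, and then to confirm that $M$ is complete (as required by the definition of a Ricci soliton). Since every tangent vector to $M$ splits into a horizontal part (lift of a vector field on $B$) and a vertical part (lift of a vector field on $F$), it suffices to test the symmetric $2$-tensor $Ric+\nabla^{2}\tilde\varphi-\lambda g$ on the three pairs (horizontal, horizontal), (horizontal, vertical) and (vertical, vertical). For this I would combine the Bishop--O'Neill formulas for the Ricci tensor of a warped product (Lemmas \ref{oneil1} and \ref{oneil2}) with the standard expressions for the Hessian of a function lifted from the base, which follow from the warped--product Levi-Civita connection: for $X,Y$ horizontal and $V,W$ vertical,
\[
\nabla^{2}\tilde\varphi(X,Y)=\nabla^{2}\varphi(X,Y),\qquad \nabla^{2}\tilde\varphi(X,V)=0,\qquad \nabla^{2}\tilde\varphi(V,W)=\frac{\nabla\varphi(f)}{f}\,g(V,W).
\]

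On the (horizontal, horizontal) block, the Bishop--O'Neill formula gives $Ric(X,Y)={}^{B}\!Ric(X,Y)-\frac{m}{f}\nabla^{2}f(X,Y)$, so that
\[
Ric(X,Y)+\nabla^{2}\tilde\varphi(X,Y)={}^{B}\!Ric(X,Y)+\nabla^{2}\varphi(X,Y)-\frac{m}{f}\nabla^{2}f(X,Y),
\]
which equals $\lambda g_{B}(X,Y)=\lambda g(X,Y)$ exactly by the base equation \eqref{EQMthmIntr}. The (horizontal, vertical) block is immediate, since $Ric(X,V)=0$, $\nabla^{2}\tilde\varphi(X,V)=0$ and $g(X,V)=0$. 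On the (vertical, vertical) block, the Bishop--O'Neill formula reads $Ric(V,W)={}^{F}\!Ric(V,W)-\big(\tfrac{\Delta f}{f}+(m-1)\tfrac{|\nabla f|^{2}}{f^{2}}\big)g(V,W)$, and since $g(V,W)=f^{2}g_{F}(V,W)$ the hypothesis ${}^{F}\!Ric=\mu g_{F}$ becomes ${}^{F}\!Ric(V,W)=\frac{\mu}{f^{2}}g(V,W)$, whence
\[
Ric(V,W)+\nabla^{2}\tilde\varphi(V,W)=\Big(\frac{\mu}{f^{2}}-\frac{\Delta f}{f}-(m-1)\frac{|\nabla f|^{2}}{f^{2}}+\frac{\nabla\varphi(f)}{f}\Big)g(V,W).
\]
Demanding that this equal $\lambda g(V,W)$ and multiplying through by $f^{2}$ gives precisely the relation \eqref{CMthmIntr}, which holds for our chosen constant $\mu$ by Proposition \ref{PP5}. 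Hence $Ric+\nabla^{2}\tilde\varphi=\lambda g$ throughout $M$.

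To finish I would check completeness of $(M,g)$. If $\gamma=(\alpha,\beta)\colon[0,a)\to M$ is a unit-speed geodesic with $a<\infty$, then $|\alpha'|_{g_{B}}\le|\gamma'|_{g}=1$, so $\alpha$ has length at most $a$; since $B$ is complete, $\alpha(t)$ converges to some $a_{0}\in B$ as $t\to a$. On a compact neighbourhood of $a_{0}$ one has $f\ge c>0$, hence $\int|\beta'|_{g_{F}}\le c^{-1}\int f(\alpha)\,|\beta'|_{g_{F}}\le c^{-1}a<\infty$, and completeness of $F$ forces $\beta(t)$ to converge as well. Thus $\gamma$ remains in a compact subset of $M$ and extends past $a$, so $M$ is complete. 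Therefore $(B^{n}\times_{f}F^{m},g,\nabla\tilde\varphi,\lambda)$ is a gradient Ricci soliton warped product, expanding, steady or shrinking according to the sign of $\lambda$.

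I do not expect a genuine obstacle here: the argument is a bookkeeping exercise, and the two places to be careful are the conformal factor $f^{2}$ relating $g$ and $g_{F}$ on vertical vectors and the sign/normalisation conventions in $\Delta$. Conceptually, the only thing to notice is that matching the fiber Einstein constant to $\mu$ in the vertical block reproduces exactly the consistency relation \eqref{CMthmIntr}; this is why Proposition \ref{PP5}, whose proof uses both \eqref{EQMthmIntr} and \eqref{EQMthmIntr2}, must be in place first.
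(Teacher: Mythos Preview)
Your proof is correct and follows essentially the same route as the paper: both verify the fundamental equation block by block via the Bishop--O'Neill formulas, with the (horizontal, horizontal) block reducing to \eqref{EQMthmIntr}, the mixed block vanishing identically, and the (vertical, vertical) block matching the constant $\mu$ from \eqref{CMthmIntr} via Proposition~\ref{PP5}. The only addition is that you supply an explicit completeness argument for the warped product, whereas the paper tacitly relies on the standard fact (stated later in a remark) that a warped product of complete factors is complete; your sketch is fine, though strictly speaking the lower bound $f\ge c>0$ should be taken over the compact closure of the image of $\alpha$ rather than just a neighbourhood of the limit point.
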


At this juncture we should stress the fact that the requirement that the metric $g_F$ is Einstein is indispensable, cf. Proposition \ref{PP2}. Furthermore, $\mu$ is necessarily constant when the dimension of the fiber is at least 2, cf. Proposition \ref{PP5}.

As an application we will construct a class of expanding Ricci soliton warped product having as a fiber an Einstein manifold with non-positive scalar curvature, cf. Corollary \ref{CorCRSWPgeneral}.

Recently, our Theorem \ref{CRSPW} along with other excellent results have been proven in the case of steady gradient Ricci solitons warped product when the base is conformal to an $n(\geq3)$-dimensional pseudo Euclidean space invariant under the action of an $(n-1)$-dimensional translation group (see \cite{romildo}). As a consequence of the ODE's theory we observe that the technique of \cite{romildo} only applies to the construction of a steady Ricci soliton.

Other relevant works to be cited are Ivey \cite{Ivey2} and Dancer-Wang \cite{DW}. The outcome of these papers is the construction of noncompact gradient steady solitons, which was achieved by the use of double and multiple warped products. Notice that this construction is a generalisation of the construction of the Bryant's soliton. Also, Gastel-Kornz \cite{GK} constructed a two-parameter family (doubly warped product metrics) of gradient expanding solitons on $\Bbb{R}^{n}\times F^m$, where $F^m$ $(m\geq2)$ is an Einstein manifold with positive scalar curvature.

\section{Preliminaries}

In this section we shall follow the notation and terminology of Bishop and O'Neill \cite{BO}. Our immediate goal is to relate the calculus of $M=B\times F$ to that of its factors. The crucial notion for this is that of a \textit{lifting}. We consider the lift $\tilde{f}=f\circ\pi$  of $f$ to $M=B\times F$ of the a smooth real-valued function $f$ on $B$ and the lift of $X\in\mathfrak{X}(B)$ to $M$ is the vector field $\tilde{X}\in \mathfrak{X}(M)$ whose value at each $(p,q)$ is the unique vector $\tilde{X}\in T_{(p,q)}M$ such that $d\pi(\tilde{X})=X$. Thus the lift of $X$ to $M$ is the \textit{unique} element of $\mathfrak{X}(M)$ that is $\pi$-related to $X$ and $\sigma$-related to the zero vector field on $F$. The set of all such horizontal lifts $\tilde{X}$ is denoted by $\mathfrak{L}(B)$. Functions and vector fields on $F$ are lifted to $M$ in the same way using the projection $\sigma$. The set of all such vertical lifts $\tilde{V}$ is denoted by $\mathfrak{L}(F)$. From now on, if $X\in\mathfrak{X}(B)$, when there is no danger of confusion, we will use the same notation for its horizontal lift $X\in\mathfrak{L}(B)$; similarly for the vertical lift $V\in\mathfrak{L}(F)$ of $V\in\mathfrak{X}(F)$.

Recall that the warped product $M=B^n\times_{f}F^m$ of two Riemannian manifolds is simply their Riemannian product endowed with the metric \eqref{WPm}.
The manifold $B$ is called the \textit{base} of $M$ and  $F$ the \textit{fiber}. Tangent vectors to the leaves are \textit{horizontal} and tangent vectors to  the fibers are \textit{vertical}. We denote by $\mathcal{H}$ the orthogonal projection of $T_{(p,q)}M$ onto its horizontal subspace $T_{(p,q)}(B\times q)$, and by $\mathcal{V}$ the projection onto the vertical subspace $T_{(p,q)}(p\times F)$. It is well known that the gradient of the lift $h\circ\pi$ of a smooth function $h$ on $B$ to $M$ is the lift  of the gradient of $h$. Thus there should be no confusion if we simplify the notation by writing $\tilde h$ for $h\circ\pi$, so that the gradient, the Hessian and the Laplacian of $\tilde h$ calculated in the metric of $M$ are denoted respectively by $\nabla\tilde h$, $\nabla^2\tilde h$ and $\Delta\tilde h$, where  $\Delta=tr(\nabla^{2})$. We will denote by $D$, $\nabla$ and $^{F}\nabla$ the Levi-Civita connections of the $M$, $B$ and $F$, respectively. The following result is crucial for us.
\begin{lemma}[\cite{BO}]\label{oneil1}
On $M=B^n\times_{f}F^m$, if $Y,Z\in\mathfrak{L}(B)$ and $V,W\in\mathfrak{L}(F)$, then
\begin{itemize}
\item [(i)] $D_{Y}Z$ is the lift of $\nabla_{Y}Z$ on $B$,
\item [(ii)] $D_{Y}V=D_{V}Y=\frac{Y(f)}{f}V$,
\item [(iii)]$\mathcal{H}(D_{V}W)=-\frac{g(V,W)}{f}\nabla f$,
\item [(iv)] $\mathcal{V}(D_{V}W)\in\mathfrak{L}(F)$ is the lift of $^{F}\nabla_{V}W$ on $F$.
\end{itemize}
In particular,
\begin{equation}\label{WP1}
\Delta\tilde{h}=\Delta h+\frac{m}{f}\nabla h(f),
\end{equation}
for every smooth function $h$ on $B$.
\end{lemma}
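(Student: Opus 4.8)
The plan is to derive every formula from the Koszul identity for the Levi-Civita connection $D$ of $(M,g)$,
\[
2g(D_{A}B,C)=Ag(B,C)+Bg(A,C)-Cg(A,B)+g([A,B],C)-g([A,C],B)-g([B,C],A),
\]
together with three structural features of the warped metric \eqref{WPm}: for $X,Y\in\mathfrak{L}(B)$ the function $g(X,Y)$ is the lift of $g_{B}(X,Y)$, hence is constant along each fiber; for $V,W\in\mathfrak{L}(F)$ one has $g(V,W)=\tilde f^{\,2}\,\langle V,W\rangle_{F}$, where $\langle V,W\rangle_{F}$ denotes the lift of $g_{F}(V,W)$ and is constant along each leaf $B\times q$; and $g(X,V)=0$ for $X\in\mathfrak{L}(B)$, $V\in\mathfrak{L}(F)$. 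I will also use the bracket relations $[X,Y]\in\mathfrak{L}(B)$, $[V,W]\in\mathfrak{L}(F)$ and $[X,V]=0$ for $X\in\mathfrak{L}(B)$, $V\in\mathfrak{L}(F)$, which hold because a horizontal lift is $\pi$-related to a field on $B$ and $\sigma$-related to $0$, with the dual statement for vertical lifts. Since the horizontal and vertical distributions are $g$-orthogonal and span $TM$, it suffices in each case to compute the pairings of the connection term against an arbitrary horizontal lift and against an arbitrary vertical lift.

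For (i) I pair $D_{Y}Z$ with $V\in\mathfrak{L}(F)$: by the structural features and the bracket relations every term of Koszul's formula vanishes, so $D_{Y}Z$ is horizontal; pairing with $X\in\mathfrak{L}(B)$, each term reduces to its $B$-analogue (horizontal lifts differentiate lifted functions exactly as on $B$, and $g$ restricts to $g_{B}$ on horizontal lifts), giving $g(D_{Y}Z,X)=g_{B}(\nabla_{Y}Z,X)\circ\pi$, so $D_{Y}Z$ is the lift of $\nabla_{Y}Z$. For (ii), torsion-freeness of $D$ and $[Y,V]=0$ give $D_{Y}V=D_{V}Y$; pairing $D_{Y}V$ with $X\in\mathfrak{L}(B)$ kills every term, so $D_{Y}V$ is vertical; pairing with $W\in\mathfrak{L}(F)$ leaves only $Yg(V,W)$, and since $\langle V,W\rangle_{F}$ is constant along the leaves this equals $Y(\tilde f^{\,2})\langle V,W\rangle_{F}=2\tfrac{Y(f)}{f}\,g(V,W)$, whence $D_{Y}V=\tfrac{Y(f)}{f}V$.

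For (iii)--(iv) I compute $D_{V}W$ with $V,W\in\mathfrak{L}(F)$ by isolating its horizontal and vertical parts. Pairing with $X\in\mathfrak{L}(B)$ leaves only $-Xg(V,W)$; writing $Xg(V,W)=X(\tilde f^{\,2})\langle V,W\rangle_{F}=\tfrac{2g(V,W)}{\tilde f}\,g(\nabla\tilde f,X)$ gives $\mathcal{H}(D_{V}W)=-\tfrac{g(V,W)}{f}\nabla f$. Pairing with $U\in\mathfrak{L}(F)$, the common factor $\tilde f^{\,2}$ from $g(\cdot,\cdot)=\tilde f^{\,2}\langle\cdot,\cdot\rangle_{F}$ — constant in the fiber directions, which are the only ones being differentiated — pulls out of every term, and what remains inside is exactly Koszul's formula on $F$ for $2\langle{}^{F}\nabla_{V}W,U\rangle_{F}$; dividing again by $\tilde f^{\,2}$ identifies $\mathcal{V}(D_{V}W)$ with the lift of ${}^{F}\nabla_{V}W$.

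Finally, for \eqref{WP1} I choose an orthonormal frame of $M$ adapted to the product: horizontal orthonormal lifts $X_{1},\dots,X_{n}$ from $B$ and $f^{-1}V_{1},\dots,f^{-1}V_{m}$, where $V_{1},\dots,V_{m}$ is $g_{F}$-orthonormal on $F$. Since $\nabla\tilde h$ is the horizontal lift of $\nabla h$, in $\Delta\tilde h=\tr(\nabla^{2}\tilde h)=\sum_{i}g(D_{X_i}\nabla\tilde h,X_i)+\sum_{j}g(D_{f^{-1}V_j}\nabla\tilde h,f^{-1}V_j)$ the horizontal terms sum by (i) to $\Delta h$, while each vertical term is $f^{-2}g(D_{V_j}\nabla\tilde h,V_j)$, which by the symmetry in (ii), $D_{V_j}\nabla\tilde h=D_{\nabla\tilde h}V_j=\tfrac{\nabla h(f)}{f}V_j$, together with $g(V_j,V_j)=f^{2}$, equals $\tfrac{\nabla h(f)}{f}$; summing over the $m$ vertical directions yields $\tfrac{m}{f}\nabla h(f)$. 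The steps are computationally routine; the one place that demands care is the bookkeeping — keeping track of which lifted functions are constant in which directions and consistently carrying the conformal factor $\tilde f^{\,2}$ relating $g$ to $g_{F}$ on vertical vectors, both in the Koszul computations of (ii)--(iv) and in the normalization of the fiber frame above.
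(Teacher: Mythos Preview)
Your proof is correct and follows the standard Koszul-formula approach that one finds in Bishop--O'Neill or in O'Neill's textbook. The paper itself does not supply a proof of this lemma: it is stated as a quotation from \cite{BO} and used as a black box, so there is nothing to compare your argument against beyond noting that you have reproduced the classical derivation. The only minor remark is that in the Laplacian computation you implicitly use that the horizontal lifts $X_1,\dots,X_n$ can be chosen to be orthonormal in a neighborhood (so that the trace formula applies pointwise), which is unproblematic but worth saying once; everything else is clean.
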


In what follows we shall write $Ric$ for the Ricci tensor of the warped product, $^{B}\!Ric$ for the lift of the Ricci tensor of $B$ and $^{F}\!Ric$ for the lift of the Ricci tensor of $F$. Moreover, we denote by $H^{h}$ the lift  of the Hessian $\nabla^2h$ of a smooth function $h$ on $B$ to $M$. Observe that for all $Y,Z\in\mathfrak{L}(B)$ we have  $\nabla^2\tilde h (Y,Z)=H^h(Y,Z)$.

\begin{lemma}[\cite{BO}] \label{oneil2}\label{oneil2}
On $M=B^n\times_{f}F^m$, $m>1$, let $Y,Z\in\mathfrak{L}(B)$ and  $V,W\in\mathfrak{L}(F)$. Then
\begin{itemize}
\item [(i)] $Ric(Y,Z)=\  ^{B}\!Ric(Y,Z)-\frac{m}{f}H^{f}(Y,Z)$,
\item [(ii)] $Ric(Y,V)=0$,
\item [(iii)] $Ric(V,W)=\ ^{F}\!Ric(V,W)- \big(\frac{\Delta f}{f}+\frac{|\nabla f|^2}{f^2}(m-1)\big)g(V,W)$.
\end{itemize}
\end{lemma}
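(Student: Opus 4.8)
The plan is to compute the full Riemann curvature tensor of $M=B^n\times_fF^m$ from the covariant-derivative formulas of Lemma \ref{oneil1} and then contract it against a suitably adapted orthonormal frame. Throughout I use the convention $R(X,Y)Z=D_XD_YZ-D_YD_XZ-D_{[X,Y]}Z$ and $Ric(Y,Z)=\sum_k g(R(e_k,Y)Z,e_k)$ for an orthonormal frame $\{e_k\}$. The single most useful structural fact is that the bracket of a horizontal lift with a vertical lift vanishes, $[Y,V]=0$ for $Y\in\mathfrak{L}(B)$ and $V\in\mathfrak{L}(F)$, since such lifts are $\pi$- and $\sigma$-related to vector fields on different factors; this lets me drop every Lie-bracket term in $R$ and replace the remaining covariant derivatives by Lemma \ref{oneil1}. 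I also record that $\nabla f$ is horizontal and that $H^f(Y,Z)=g(D_Y\nabla f,Z)$ for $Y,Z\in\mathfrak{L}(B)$.

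First I would establish the intermediate curvature identities, organised by how many vertical vectors appear. Identity (a), for $Y,Z,W\in\mathfrak{L}(B)$: $R(Y,Z)W$ is the lift of ${}^BR(Y,Z)W$, immediate from Lemma \ref{oneil1}(i). Identity (b): the mixed relations $R(V,Y)Z=-\frac{H^f(Y,Z)}{f}V$ and $R(Y,Z)V=0$, obtained by expanding $R(V,Y)Z=D_VD_YZ-D_YD_VZ$ with Lemma \ref{oneil1}(ii) and collecting the terms into the Hessian $H^f(Y,Z)=Y(Zf)-(\nabla_YZ)f$. Identity (c): the ``one horizontal, two vertical'' relation $R(Y,V)W=-\frac{g(V,W)}{f}D_Y\nabla f$, a horizontal vector, obtained by combining Lemma \ref{oneil1}(ii) and (iii). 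Identity (d): the all-vertical relation $R(V,W)U={}^FR(V,W)U-\frac{|\nabla f|^2}{f^2}\big(g(W,U)V-g(V,U)W\big)$, obtained from Lemma \ref{oneil1}(iii)--(iv).

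With these in hand the three Ricci formulas follow by tracing over the adapted frame consisting of horizontal lifts $E_1,\dots,E_n$ of a $g_B$-orthonormal frame on $B$ together with $f^{-1}U_1,\dots,f^{-1}U_m$, where $U_1,\dots,U_m$ lifts a $g_F$-orthonormal frame on $F$ (the factor $f^{-1}$ is forced because $g(U_j,U_k)=f^2\delta_{jk}$). For (i) the horizontal part of the trace reproduces ${}^B\!Ric(Y,Z)$, while the vertical part, via identity (b), contributes $f^{-2}\sum_j g\big(-\tfrac{H^f(Y,Z)}{f}U_j,U_j\big)=-\frac{m}{f}H^f(Y,Z)$. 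For (ii), identity (b) kills the horizontal summands ($R(E_a,Y)V=0$) while identity (c) makes each vertical summand $R(U_j,Y)V$ horizontal, hence $g$-orthogonal to $U_j$, so $Ric(Y,V)=0$. For (iii), the horizontal part of the trace uses identity (c): summing $g(D_{E_a}\nabla f,E_a)=H^f(E_a,E_a)$ over $a$ produces $-\frac{\Delta f}{f}g(V,W)$; the vertical part uses identity (d), whose fiber-curvature term contracts to ${}^F\!Ric(V,W)$ because the factor $f^2$ in $g=f^2g_F$ cancels the $f^{-2}$ normalisation, and whose correction term contracts to $-\frac{|\nabla f|^2}{f^2}(m-1)g(V,W)$ using $\sum_j g(U_j,V)g(U_j,W)=f^2g(V,W)$.

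The main obstacle is identity (d), the all-vertical curvature term, since it is the only place where the splitting of $D_VW$ into its horizontal and vertical parts genuinely interacts with a further differentiation. One must differentiate $\mathcal{H}(D_VW)=-\frac{g(V,W)}{f}\nabla f$ in a vertical direction, use that $\mathcal{V}(D_VW)$ lifts $^F\nabla_VW$ to recover the intrinsic fiber curvature, and carefully separate the outcome into a purely fiber-intrinsic piece and the horizontal correction of size $\frac{|\nabla f|^2}{f^2}$. Keeping the curvature sign convention consistent between this interior computation and the final contraction is the most error-prone point; everything else reduces to bookkeeping of the adapted frame and cancellation of powers of $f$.
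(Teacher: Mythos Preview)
The paper does not supply its own proof of this lemma: it is stated with attribution to Bishop--O'Neill \cite{BO} and used as a black box. Your argument is correct and is precisely the standard derivation one finds in that reference (or in O'Neill's \emph{Semi-Riemannian Geometry}): compute the full curvature tensor from the connection formulas of Lemma~\ref{oneil1}, obtaining the four mixed-type identities you label (a)--(d), and then contract over an adapted orthonormal frame $\{E_a\}\cup\{f^{-1}U_j\}$. The intermediate curvature identities and the $f$-bookkeeping you outline are all right, including the delicate point you flag in identity (d). So there is nothing to compare against in the paper itself; your proposal simply fills in the cited proof.
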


Now, given a gradient Ricci soliton $(M^{k},g,\nabla\psi,\lambda)$, we take the trace of the equation \eqref{Ric2} to obtain
\begin{equation*}
R+\Delta\psi=k\lambda.
\end{equation*}
Moreover, Hamilton \cite{hamilton2} proved that
\begin{equation}\label{Ham-C}
2\lambda\psi - |\nabla\psi|^{2}+ \Delta\psi=c,
\end{equation}
for some constant $c$. In this direction we deduce similar equation to \eqref{Ham-C} for the base of the a gradient Ricci soliton warped product, cf. equation \eqref{EqNecessary} below. This is the first result of the next section.

\section{Existence conditions for the Ricci soliton warped product}

Now we study a Riemannian manifold $(B^n,g_B)$ as possible base of a gradient Ricci soliton warped product $(M=B^n\times_{f}F^m,g,\nabla\psi,\lambda)$. This way, it is natural to suppose that the potential function $\psi$ is the lifting of a smooth function $\varphi$ defined in $B^n$, i.e., the base will carry crucial information of $M$, like on an Einstein warped product case as well as all the known examples in the literature. With this considerations in mind, we establish restrictions on the functions that parametrise a gradient Ricci soliton warped product. The first one is the Hamilton's equation \eqref{Ham-C} for $B^n$.

\begin{proposition} \label{Lem2}
Let $M=B^n\times_f F^m$ be a warped product and $\varphi$ a smooth function on $B$ so that $(M,g,\nabla\tilde\varphi,\lambda)$ is a gradient Ricci soliton. Then we have
\begin{equation}\label{EqNecessary}
2\lambda\varphi-|\nabla\varphi|^{2}+\Delta\varphi+\frac{m}{f}\nabla\varphi(f)= c
\end{equation}
for some constant $c$.
\end{proposition}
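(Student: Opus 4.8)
The plan is to apply Hamilton's identity \eqref{Ham-C} directly to the gradient Ricci soliton $(M,g,\nabla\tilde\varphi,\lambda)$, which has dimension $k=n+m$, and then translate the resulting expression on $M$ back down to the base $B$ using the Bishop--O'Neill formula \eqref{WP1}. Since $(M,g,\nabla\tilde\varphi,\lambda)$ is a gradient Ricci soliton, \eqref{Ham-C} gives a constant $c$ with
\[
2\lambda\tilde\varphi-|\nabla\tilde\varphi|^{2}+\Delta\tilde\varphi=c
\]
on all of $M$. The point is now to rewrite each of the three terms in terms of data on $B$.

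First I would observe that $\tilde\varphi=\varphi\circ\pi$ is constant along the fibers, so $2\lambda\tilde\varphi$ is simply the lift of $2\lambda\varphi$. Next, since the gradient of the lift of a function on $B$ is the (horizontal) lift of the gradient on $B$, and horizontal lifts have the same length in the metric $g$ as the original vectors in $g_B$ (the warping factor $f^2$ only scales vertical directions), we get $|\nabla\tilde\varphi|^{2}=|\nabla\varphi|^{2}\circ\pi$. Finally, the Laplacian term is handled by \eqref{WP1} with $h=\varphi$, giving $\Delta\tilde\varphi=\Delta\varphi+\frac{m}{f}\nabla\varphi(f)$ (again interpreted via lifts). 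Substituting these three identities into Hamilton's equation yields
\[
2\lambda\varphi-|\nabla\varphi|^{2}+\Delta\varphi+\frac{m}{f}\nabla\varphi(f)=c,
\]
which is precisely \eqref{EqNecessary}; since this is an identity of functions lifted from $B$ and they agree with a constant on $M$, they agree with that same constant on $B$.

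There is no real obstacle here; the only point requiring a modicum of care is the bookkeeping between objects on $M$ and their counterparts on $B$ — specifically, verifying that $|\nabla\tilde\varphi|_g^2$ equals $|\nabla\varphi|_{g_B}^2$ (pulled back), which follows because $\nabla\tilde\varphi$ is horizontal and $g$ restricted to horizontal vectors is the pullback of $g_B$. One could instead cite Lemma \ref{oneil1}(i)--(ii) to see the gradient of $\tilde\varphi$ is the lift of $\nabla\varphi$, and then use the remark preceding Lemma \ref{oneil2} that $\nabla^2\tilde\varphi(Y,Z)=H^\varphi(Y,Z)$ together with \eqref{WP1} for the trace. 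Everything else is a direct substitution into \eqref{Ham-C}, so the proof is short.
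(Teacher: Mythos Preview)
Your proof is correct and follows essentially the same approach as the paper: apply Hamilton's identity \eqref{Ham-C} on $M$ and then use $\nabla\tilde\varphi=\widetilde{\nabla\varphi}$ together with the Bishop--O'Neill Laplacian formula \eqref{WP1} to rewrite everything on $B$. The paper's version is terser, while you spell out why $|\nabla\tilde\varphi|_g^2=|\nabla\varphi|_{g_B}^2$, but the argument is the same.
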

\begin{proof}
From \eqref{Ham-C} we have
\begin{equation} \label{L2-1}
2\lambda\tilde{\varphi}-|\nabla\tilde{\varphi}|^{2}+\Delta\tilde{\varphi}=c
\end{equation}
for some constant $c$. On the other hand,
\begin{eqnarray} \label{L2-2}
\nabla\tilde{\varphi}=\widetilde{\nabla\varphi} \ \ \mbox{and} \ \ \Delta\tilde{\varphi}=\Delta\varphi+\frac{m}{f}\nabla\varphi(f)
\end{eqnarray}
and substituting \eqref{L2-2} in \eqref{L2-1} we obtain immediately equation \eqref{EqNecessary}.
\end{proof}

\begin{proposition} \label{PP2}
Let $M=B^n\times_f F^m$ be a warped product and $\varphi$ a smooth function on $B$ so that $(M,g,\nabla\tilde\varphi,\lambda)$ is a gradient Ricci soliton, with $m>1$. Then we have
\begin{equation}\label{EqHteq}
^{B}\!Ric+H^{\varphi}=\lambda g_{B}+\frac{m}{f}H^{f}
\end{equation}
and $^{F}\!Ric=\mu g_{F}$ with $\mu$ satisfying
\begin{equation}\label{EqHteqAux}
\mu=\lambda f^{2}+f\Delta f+(m-1)|\nabla f|^{2}-f\nabla\varphi(f).
\end{equation}
\end{proposition}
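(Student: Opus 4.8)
The plan is to use the Bishop--O'Neill formulas of Lemma \ref{oneil2} to rewrite the soliton equation \eqref{Ric2} of $M$ in terms of data on $B$ and $F$. First I would evaluate the fundamental equation $Ric + \nabla^2\tilde\varphi = \lambda g$ on pairs $Y,Z\in\mathfrak{L}(B)$ of horizontal lifts. Since $\varphi$ depends only on $B$, we have $\nabla^2\tilde\varphi(Y,Z) = H^\varphi(Y,Z)$, and by Lemma \ref{oneil2}(i) $Ric(Y,Z) = {}^B\!Ric(Y,Z) - \frac{m}{f}H^f(Y,Z)$; also $g(Y,Z) = g_B(Y,Z)$ on horizontal vectors. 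Substituting and rearranging immediately yields \eqref{EqHteq}. The mixed case $Ric(Y,V)=0$ together with $\nabla^2\tilde\varphi(Y,V)=0$ (which holds because $\nabla\tilde\varphi$ is horizontal and $D_Y V = \frac{Y(f)}{f}V$ is vertical, so the Hessian has no mixed component) gives $0=0$ and carries no information, as expected.

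The substantive part is the vertical-vertical case. Evaluating \eqref{Ric2} on $V,W\in\mathfrak{L}(F)$, I would use Lemma \ref{oneil2}(iii) for $Ric(V,W)$, compute $\nabla^2\tilde\varphi(V,W)$, and note $g(V,W) = f^2 g_F(V,W)$. For the Hessian term, $\nabla^2\tilde\varphi(V,W) = V(W(\tilde\varphi)) - (D_V W)(\tilde\varphi) = -\mathcal{H}(D_V W)(\tilde\varphi)$ since $\tilde\varphi$ is constant along fibers; by Lemma \ref{oneil1}(iii) this equals $\frac{g(V,W)}{f}\nabla f(\tilde\varphi) = f\,\nabla\varphi(f)\,g_F(V,W)$. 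Putting these together:
\begin{equation*}
{}^F\!Ric(V,W) - \Big(\tfrac{\Delta f}{f} + \tfrac{|\nabla f|^2}{f^2}(m-1)\Big) f^2 g_F(V,W) + f\,\nabla\varphi(f)\, g_F(V,W) = \lambda f^2 g_F(V,W).
\end{equation*}
Hence ${}^F\!Ric(V,W) = \mu\, g_F(V,W)$ with
\begin{equation*}
\mu = \lambda f^2 + f\Delta f + (m-1)|\nabla f|^2 - f\,\nabla\varphi(f),
\end{equation*}
which is \eqref{EqHteqAux}.

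The only point requiring a word of justification is that the right-hand side of \eqref{EqHteqAux}, which is a function on $B$, is actually constant: this is forced because ${}^F\!Ric$ is a tensor on $F$ and the left-hand side ${}^F\!Ric(V,W)$ does not depend on the base point. Concretely, fixing a point $q\in F$ and vectors $V,W$ with $g_F(V,W)\neq 0$ at $q$, the quotient $\frac{{}^F\!Ric(V,W)}{g_F(V,W)}$ evaluated at $(p,q)$ equals the expression in \eqref{EqHteqAux} at $p$ for every $p\in B$, yet its value is independent of $p$; since $m>1$ there genuinely exist such $V,W$, so $\mu$ is a well-defined constant and $F$ is Einstein. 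I expect this independence-of-base-point argument to be the main (though mild) obstacle, in that it is the step where the product structure, rather than mere pointwise algebra, is used; everything else is a direct substitution into the formulas of Lemmas \ref{oneil1} and \ref{oneil2}.
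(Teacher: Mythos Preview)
Your derivation of \eqref{EqHteq} and of the identity $^{F}\!Ric(V,W)=\mu\,g_F(V,W)$ with $\mu$ given by \eqref{EqHteqAux} follows exactly the paper's argument: apply Lemma~\ref{oneil2}(i) on horizontal pairs, Lemma~\ref{oneil2}(iii) on vertical pairs, and compute $\nabla^2\tilde\varphi(V,W)$ using Lemma~\ref{oneil1}. (Your Hessian computation uses Lemma~\ref{oneil1}(iii) through $-\mathcal{H}(D_VW)(\tilde\varphi)$, while the paper uses Lemma~\ref{oneil1}(ii) through $g(D_V\nabla\tilde\varphi,W)$; both give $f\,\nabla\varphi(f)\,g_F(V,W)$.)

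The one genuine difference is your final paragraph. The paper does \emph{not} argue in Proposition~\ref{PP2} that $\mu$ is constant; it records \eqref{EqHteqAux} as a pointwise identity and postpones constancy to Proposition~\ref{PP5}, where it is obtained by a contracted Bianchi-identity computation that also requires the Hamilton-type equation \eqref{EqNecessary}. Your separation-of-variables argument (the expression in \eqref{EqHteqAux} depends only on $p\in B$, while $^{F}\!Ric(V,W)/g_F(V,W)$ depends only on $q\in F$, hence both are constant) is correct and markedly shorter, and it does not need \eqref{EqNecessary} at all. What the paper's route buys is a derivation of \eqref{CMthm} purely from the base data \eqref{EQMthm}, which is what is actually used in the converse direction (Theorem~\ref{CRSPW}); your argument, by contrast, uses the global warped-product structure and so would not be available in that setting.
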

\begin{proof}
First we observe that for all $Y,Z\in\mathfrak{L}(B)$ we have
\begin{equation*}
Ric(Y,Z)=\!^{B}\!Ric(Y,Z)-\frac{m}{f}H^{f}(Y,Z).
\end{equation*}
Using the fundamental equation \eqref{Ric2} and the fact that $\nabla^{2}\tilde{\varphi}(Y,Z)=H^{\varphi}(Y,Z)$ we deduce that
\begin{equation*}
^{B}\!Ric(Y,Z)=\lambda g_{B}(Y,Z)-H^{\varphi}(Y,Z)+\frac{m}{f}H^{f}(Y,Z).
\end{equation*}
This proves the first assertion of the proposition. Similarly, for $V,W\in\mathfrak{L}(F)$ we have
\begin{eqnarray*}
^{F}\!Ric(V,W)&=&\lambda g(V,W)-\nabla^{2}\tilde{\varphi}(V,W)+\Big(\frac{\Delta f}{f}+(m-1)\frac{|\nabla f|^{2}}{f^{2}}\Big)g(V,W)\\
&=&\lambda f^{2}g_{F}(V,W)-\nabla^{2}\tilde{\varphi}(V,W)+f\Big(\Delta f+(m-1)\frac{|\nabla f|^{2}}{f}\Big)g_{F}(V,W).
\end{eqnarray*}
Since $\nabla\tilde\varphi\in\mathfrak{L}(B)$ we get
\begin{equation}\label{eqAux1Thm2}
\nabla^{2}\tilde{\varphi}(V,W) = g(D_{V}\nabla\tilde\varphi,W)= g\Big(\frac{\nabla\widetilde\varphi(f)}{f}V,W\Big) = f\nabla\varphi(f)g_{F}(V,W).
\end{equation}
Thus,
\begin{equation*}
^{F}\!Ric(V,W)=\big(\lambda f^{2}+f\Delta f+(m-1)|\nabla f|^{2}-f\nabla\varphi(f)\big)g_{F}(V,W).
\end{equation*}
Hence we complete the proof of the proposition.
\end{proof}

The main purpose of this article is to consider Ricci solitons that are warped products, so the previous proposition justifies why we considered equation \eqref{EQMthmIntr} in the introduction. Motivated by the work of Maschler \cite{maschler},  we will refer to this equation as the Ricci-Hessian type equation. In \cite{case1}, Case-Shu-Wei introduced the concept of quasi-Einstein metrics which originated from the usual study of Einstein manifolds that are realised as warped products, cf. Besse \cite{Besse}. Thus, it was naturally expected that the class of Ricci-Hessian type equation contains the class of quasi-Einstein metrics. In order to prove this assertion, let $(B^n,g,h,\lambda)$ be a quasi-Einstein metric, that is
\begin{equation*}
Ric+\nabla^2h-\frac{1}{m}dh\otimes dh=\lambda g
\end{equation*}
for some $\lambda\in\Bbb{R}$ and $0<m\leq\infty$. Taking $m=4r<\infty$, $\varphi=\frac{h}{2}$ and $f=e^{-\frac{\varphi}{r}}$, we get
\begin{equation*}
\frac{r}{f}\nabla^2f=-\nabla^2\varphi + \frac{1}{r}d\varphi\otimes d\varphi.
\end{equation*}
Thus, by straightforward computation we have that $(B^n,g,\varphi,f)$ satisfies the Ricci-Hessian type equation, namely
\begin{equation}\label{EqAuxEx1}
Ric+\nabla^2\varphi =\lambda g+\frac{r}{f}\nabla^2f.
\end{equation}
For $m=\infty$, we must consider $\varphi=h$ and $f=constant$.

Now let us suppose that $(B^n,g,\varphi,f,\lambda)$ satisfies an equation of the type \eqref{EqAuxEx1}, for some $r>0$. Notice that the following relation holds true
\begin{equation}\label{Eq2AuxEx1}
\nabla^2\ln(f)=\frac{1}{f}\nabla^2f-\frac{1}{f^2}df\otimes df.
\end{equation}
From \eqref{EqAuxEx1} and \eqref{Eq2AuxEx1}, we get
\begin{eqnarray}\label{Eq3AuxEx1}
\nonumber\lambda g &=& Ric+\nabla^2\varphi -r\nabla^2\ln(f) - \frac{r}{f^2}df\otimes df\\
&=&Ric+\nabla^2\xi -\frac{1}{r}d\xi\otimes d\xi + \nabla^2\varphi,
\end{eqnarray}
where $\xi:=-r\ln(f)$, which completes the proof of our assertion. This turn out that the difference between this classes is in the non-homotheticity of $\nabla\varphi$.

Another interesting situation is when we allow $\lambda$ to be a smooth function on the manifold. In this case, we have the following example which may be very useful in further studies.
\begin{example}\label{ex2}
Let $(\Bbb{M}^n(\tau),g_{\circ})$ be the standard sphere $\Bbb{S}^n$ or the hyperbolic space $\Bbb{H}^n$ for $\tau=1$ or $\tau=-1$ respectively. We denote
$h_v$ a height function with respect to a fixed unit vector $v\in\Bbb{R}^{n+1}$.
Then for each real number $m\neq0$, the functions $\lambda=\tau(n-1) -\frac{\tau}{m}h_v^2- h_v$, $f=e^{-\frac{\tau}{m}h_v}$ and $\varphi=\frac{1}{2m}h_v^2$ satisfy equation \eqref{EQMthmIntr} on $(\Bbb{M}^n(\tau),g_{\circ})$, once
\begin{equation*}
d\varphi=\frac{h_v}{m}dh_v, \quad df=-\frac{\tau}{m}e^{-\frac{\tau}{m}h_v} dh_v \quad \mbox{and}\quad \nabla^2 h_v=-\tau h_v g_\circ,
\end{equation*}
we get
\begin{equation*}
\nabla^2\varphi=\frac{1}{m}dh_v\otimes dh_v-\frac{\tau}{m}h_v^2g_{\circ}\quad \mbox{and}\quad \frac{m}{f}\nabla^2f=\frac{1}{m}dh_v\otimes dh_v+ h_vg_{\circ}.
\end{equation*}
On the other hand, $Ric=\tau(n-1)g_{\circ},$ so it is sufficient to choose $\lambda$ as at present example in order to obtain our desired statement.
\end{example}

Now, we can identify any $(0,2)$-tensor $T$ on $M$ with a  $(1,1)$-tensor by the equation
\begin{equation*}
g(T(Z), Y)=T(Z,Y)
\end{equation*}
for all $Y, Z \in \mathfrak{X}(M)$. Thus, we get
\begin{equation*}
\dv(\varphi T)=\varphi \dv T+ T(\nabla\varphi,\cdot)\quad \mbox{and}\quad \nabla(\varphi T)=\varphi\nabla T+d\varphi\otimes T
\end{equation*}
for all $\varphi\in C^\infty(M).$ In particular, we have $\dv(\varphi g)=d\varphi$. Moreover, the following general facts are well known in the literature
\begin{equation*}
 \dv\nabla^2\varphi = Ric(\nabla\varphi,\cdot)+ d\Delta\varphi \quad \mbox{and}\quad \frac{1}{2}d|\nabla\varphi|^2 = \nabla^2\varphi(\nabla\varphi,\cdot).
\end{equation*}

These identities will be used in what follows without further comments.

\begin{proposition}\label{PP5}
Let $(B^{n},g)$ be a Riemannian manifold with two smooth functions $f>0$ and $\varphi$ satisfying
\begin{equation}\label{EQMthm}
Ric+\nabla^{2}\varphi=\lambda g+\frac{m}{f}\nabla^{2}f \quad \mbox{and} \quad 2\lambda\varphi-|\nabla\varphi|^{2}+\Delta\varphi+\frac{m}{f}\nabla\varphi(f)=c
\end{equation}
for some constants $m,c,\lambda\in\Bbb{R}$, with $m\neq 0$. Then $f$ and $\varphi$ satisfy
\begin{equation}\label{CMthm}
\lambda f^{2}+f\Delta f +(m-1)|\nabla f|^{2} -f\nabla\varphi(f)=\mu
\end{equation}
for a constant $\mu\in\Bbb{R}.$
\end{proposition}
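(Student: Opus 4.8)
The plan is to show that the left-hand side of \eqref{CMthm}, call it $\Phi:=\lambda f^{2}+f\Delta f+(m-1)|\nabla f|^{2}-f\nabla\varphi(f)$, has vanishing gradient on the connected manifold $B^n$, whence it is constant. The natural route is to differentiate $\Phi$ and substitute the two hypotheses \eqref{EQMthm} to make everything cancel. First I would compute $d\Phi$ term by term: $d(\lambda f^2)=2\lambda f\,df$; $d(f\Delta f)=\Delta f\,df+f\,d\Delta f$; $d\big((m-1)|\nabla f|^2\big)=2(m-1)\nabla^2 f(\nabla f,\cdot)$ using the identity $\tfrac12 d|\nabla f|^2=\nabla^2 f(\nabla f,\cdot)$; and $d\big(f\nabla\varphi(f)\big)=\nabla\varphi(f)\,df+f\,d\big(g(\nabla\varphi,\nabla f)\big)$, where $d\big(g(\nabla\varphi,\nabla f)\big)=\nabla^2\varphi(\nabla f,\cdot)+\nabla^2 f(\nabla\varphi,\cdot)$.

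Next I would use the first equation in \eqref{EQMthm} to rewrite the Hessian of $f$: since $\tfrac{m}{f}\nabla^2 f=Ric+\nabla^2\varphi-\lambda g$, we get $\nabla^2 f=\tfrac{f}{m}\big(Ric+\nabla^2\varphi-\lambda g\big)$, and hence $d\Delta f$ comes from tracing and differentiating this relation. Actually it is cleaner to handle $f\,d\Delta f$ directly: from $\tfrac{m}{f}\Delta f = R+\Delta\varphi-n\lambda$ (the trace of the first equation) one obtains $\Delta f=\tfrac{f}{m}(R+\Delta\varphi-n\lambda)$, so $d\Delta f$ involves $dR$ and $d\Delta\varphi$. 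To kill $dR$ I would invoke the contracted second Bianchi identity in the form $\tfrac12 dR=\dv Ric$, together with $\dv\nabla^2\varphi=Ric(\nabla\varphi,\cdot)+d\Delta\varphi$ and $\dv(\lambda g)=0$; taking the divergence of the first equation in \eqref{EQMthm} gives a relation expressing $\dv\big(\tfrac{m}{f}\nabla^2 f\big)$, and via $\dv(\tfrac1f\nabla^2 f)=\tfrac1f\dv\nabla^2 f-\tfrac1{f^2}\nabla^2 f(\nabla f,\cdot)$ this feeds back the divergence of $\nabla^2 f$ in terms of $Ric(\nabla f,\cdot)$, $d\Delta f$ and lower-order terms. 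Finally I would differentiate the second equation in \eqref{EQMthm}, using $\tfrac12 d|\nabla\varphi|^2=\nabla^2\varphi(\nabla\varphi,\cdot)$ and $d\big(\tfrac{m}{f}\nabla\varphi(f)\big)=\tfrac{m}{f}\big(\nabla^2\varphi(\nabla f,\cdot)+\nabla^2 f(\nabla\varphi,\cdot)\big)-\tfrac{m}{f^2}\nabla\varphi(f)\,df$, to obtain a second relation among the same quantities, and then combine the two.

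The assembly step is where the bookkeeping concentrates: after substituting, all the $Ric$-, $\nabla^2\varphi$- and $d\Delta\varphi$-terms should pair off against one another, leaving $d\Phi=0$. I expect the main obstacle to be organizing these cancellations cleanly — in particular correctly tracking the factors of $f$, $\tfrac1f$, $\tfrac1{f^2}$ and the coefficient $m$ versus $m-1$, since a single sign or factor error will leave a spurious nonzero term. A useful sanity check along the way is the Einstein/quasi-Einstein specialization: when $\varphi$ is constant, $\Phi=\lambda f^2+f\Delta f+(m-1)|\nabla f|^2$ and its constancy is the classical fact that the base of an Einstein warped product is quasi-Einstein with the fiber constant $\mu$, so the general computation must reduce to that known identity. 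Once $d\Phi\equiv0$ is established, connectedness of $B^n$ gives $\Phi\equiv\mu$ for a constant $\mu\in\Bbb{R}$, which is \eqref{CMthm}.
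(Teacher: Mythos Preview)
Your plan is correct and follows essentially the same route as the paper: trace the first equation to relate $R$, $\Delta f$, $\Delta\varphi$; feed this into the contracted Bianchi identity together with $\dv\nabla^2\varphi=Ric(\nabla\varphi,\cdot)+d\Delta\varphi$ and the expansion of $\dv\big(\tfrac{1}{f}\nabla^2 f\big)$; then use the derivative of the second hypothesis to eliminate the remaining $\varphi$--terms and arrive at $d\Phi=0$. The only difference is cosmetic---the paper starts from Bianchi and manipulates toward $d\Phi$, while you start from $d\Phi$ and pull in Bianchi---but the ingredients, identities, and cancellations are the same.
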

\begin{proof} From \eqref{EQMthm} we obtain $S=n\lambda+\frac{m}{f}\Delta f-\Delta\varphi$, where $S$ is the scalar curvature of $B$. Thus,
\begin{equation} \label{Bianch1}
dS=-\frac{m}{f^{2}}\Delta f df+\frac{m}{f}d(\Delta f)-d(\Delta\varphi).
\end{equation}
Let us now use the second contracted Bianch identity, namely
\begin{equation}\label{Biancc}
0=-\frac{1}{2}dS+\dv Ric.
\end{equation} We compute
\begin{eqnarray*}
\dv Ric&=&m\dv\Big(\frac{1}{f}\nabla^{2}f\Big)-\dv(\nabla^{2}\varphi)\\
&=&m\Big(\frac{1}{f}\dv(\nabla^{2}f)-\frac{1}{f^{2}}(\nabla^{2}f)(\nabla f,\cdot)\Big)-\dv(\nabla^{2}\varphi)\\
&=&\dfrac{m}{f}Ric(\nabla f,\cdot)+\frac{m}{f}d(\Delta f)-\frac{m}{2f^{2}}d(|\nabla f|^{2})-Ric(\nabla\varphi,\cdot)-d(\Delta\varphi).
\end{eqnarray*}
From \eqref{EQMthm} we have
\begin{eqnarray*}
Ric(\nabla f,\cdot)&=&\lambda df+\dfrac{m}{2f}d(|\nabla f|^{2})-(\nabla^{2}\varphi)(\nabla f,\cdot),\\
Ric(\nabla \varphi,\cdot)&=&\lambda d\varphi+\dfrac{m}{f}(\nabla^{2}f)(\nabla\varphi,\cdot)-\frac{1}{2}d(|\nabla\varphi|^{2}).
\end{eqnarray*}
This way
\begin{eqnarray*}
\dv Ric&=&\dfrac{m}{f}\Big(\lambda df+\dfrac{m}{2f}d(|\nabla f|^{2})-(\nabla^{2}\varphi)(\nabla f,\cdot)\Big)+\frac{m}{f}d(\Delta f)-\frac{m}{2f^{2}}d(|\nabla f|^{2})\\
&&-\Big(\lambda d\varphi+\dfrac{m}{f}(\nabla^{2}f)(\nabla\varphi,\cdot)-\frac{1}{2}d(|\nabla\varphi|^{2})\Big)-d(\Delta\varphi)\\
&=&\dfrac{m}{f}\lambda df+\dfrac{m^{2}}{2f^{2}}d(|\nabla f|^{2})-\dfrac{m}{f}(\nabla^{2}\varphi)(\nabla f,\cdot)+\frac{m}{f}d(\Delta f)-\frac{m}{2f^{2}}d(|\nabla f|^{2})\\
&&-\lambda d\varphi-\dfrac{m}{f}(\nabla^{2}f)(\nabla\varphi,\cdot)+\frac{1}{2}d(|\nabla\varphi|^{2})-d(\Delta\varphi)\\
&=&\dfrac{m}{f}\lambda df+\dfrac{m(m-1)}{2f^{2}}d(|\nabla f|^{2})+\frac{m}{f}d(\Delta f) -\lambda d\varphi+\frac{1}{2}d(|\nabla\varphi|^{2})-d(\Delta\varphi)\\
&&-\dfrac{m}{f}\big[(\nabla^{2}\varphi)(\nabla f,\cdot)+(\nabla^{2}f)(\nabla\varphi,\cdot)\big].
\end{eqnarray*}
Since $d(\nabla\varphi(f))=(\nabla^{2}\varphi)(\nabla f,\cdot)+(\nabla^{2}f)(\nabla\varphi,\cdot)$, then
\begin{eqnarray}\label{Bianch2}
\nonumber\dv Ric&=&\dfrac{m}{f}\lambda df+\dfrac{m(m-1)}{2f^{2}}d(|\nabla f|^{2})+\frac{m}{f}d(\Delta f)-\lambda d\varphi+\frac{1}{2}d(|\nabla\varphi|^{2})-d(\Delta\varphi)\\
&& -\dfrac{m}{f}d(\nabla\varphi(f)).
\end{eqnarray}
Plugging the equations \eqref{Bianch1} and \eqref{Bianch2} into equation \eqref{Biancc} we have
\begin{eqnarray*}
0&=&\frac{m}{2f^{2}}\Delta fdf-\frac{m}{2f}d(\Delta f)-\frac{1}{2}d(\Delta\varphi)+\dfrac{m}{f}\lambda df+\dfrac{m(m-1)}{2f^{2}}d(|\nabla f|^{2})+\frac{m}{f}d(\Delta f)\\
&&-\lambda d\varphi+\frac{1}{2}d(|\nabla\varphi|^{2})-\dfrac{m}{f}d(\nabla\varphi(f)).
\end{eqnarray*}
Multiplying the previous equation by $\frac{2f^{2}}{m}$ we get
\begin{eqnarray*}
0&=& \Delta fdf - fd(\Delta f) - \frac{f^{2}}{m}d(\Delta\varphi) + 2f\lambda df+(m-1)d(|\nabla f|^{2})+2fd(\Delta f)\\
&&-\frac{2f^{2}}{m}\lambda d\varphi+\frac{f^{2}}{m}d(|\nabla\varphi|^{2})-2fd(\nabla\varphi(f)).
\end{eqnarray*}
Simplifying and regrouping the terms, we obtain
\begin{equation}\label{01}
0=d\big(f\Delta f+\lambda f^{2}+(m-1)|\nabla f|^{2}\big)-\frac{f^{2}}{m}d\big(\Delta\varphi+2\lambda\varphi-|\nabla\varphi|^{2}\big)-2fd(\nabla\varphi(f)).
\end{equation}
But, by hypothesis $2\lambda\varphi-|\nabla\varphi|^{2}+\Delta\varphi+\frac{m}{f}\nabla\varphi(f)=c$. Whence
\begin{equation}\label{02}
-\frac{f^{2}}{m}d(\Delta\varphi+2\lambda\varphi-|\nabla\varphi|^{2})-fd(\nabla\varphi(f))= -\nabla\varphi(f)df.
\end{equation}
Consequently, equations \eqref{01} and \eqref{02} infer
\begin{eqnarray*}
d\big(f\Delta f+\lambda f^{2}+(m-1)|\nabla f|^{2}-f\nabla\varphi(f)\big)=0,
\end{eqnarray*}
which is sufficient to complete the proof.
\end{proof}

\section{Proof of the main results}

\subsection{Proof of Theorem \ref{mainthm}}
\begin{proof}
If $M=B^n\times_{f}F^m$,  $m>1$, is a gradient Ricci soliton with $Ric+\nabla^{2}\tilde{\varphi}=\lambda g$, then  Proposition \ref{PP2} implies  $^F\!Ric=\mu g_{F}$ where
\begin{equation}\label{Ric12}
\mu=\lambda f^{2}+f\Delta f+(m-1)|\nabla f|^{2}-f\nabla\varphi(f).
\end{equation}
By Proposition \ref{PP5} $\mu$ is constant, where the equations in \eqref{EQMthm} are guaranteed from equations \eqref{EqHteq} and \eqref{EqNecessary}. Let $p,q\in B^n$ be the points where $f$ attains its maximum and minimum in $B^n$. Then
\begin{equation*}
\nabla f(p)=0=\nabla f(q)\quad \mbox{and}\quad \Delta f(p)\leq 0\leq \Delta f(q).
\end{equation*}
Since $f>0$ and $\lambda\leq0$ we have $-\lambda f(p)^{2}\geq-\lambda f(q)^{2}$ and combining this with \eqref{Ric12} we get
\begin{equation} \label{Ric13}
0\geq f(p)\Delta f(p)=\mu-\lambda f(p)^{2}\geq \mu-\lambda f(q)^{2}=f(q)\Delta f(q)\geq 0.
\end{equation}
This last equation now implies
\begin{eqnarray*}
\mu-\lambda f(p)^{2}=\mu-\lambda f(q)^{2}=0.
\end{eqnarray*}
Thus, $\lambda<0$ infers that  $f(p)=f(q)$, i.e., $f$ is constant. For $\lambda=0$ we have that $\mu=0$  and equation \eqref{Ric12} reduces to
\begin{equation*}
Lf=\Delta f-\nabla\varphi(f)=\frac{1}{f}(1-m)|\nabla f|^{2}\leq 0,
\end{equation*}
where $L:=\Delta-\nabla\varphi$. Therefore, by the strong maximum principle $f$ is constant. In either case $M$ is a Riemannian product.
\end{proof}

\begin{remark}\label{RemNonTriv}
The warping function $f$ does not reach a minimum if $\mu\leq 0$ and $\lambda>0$. Indeed,  under the latter hypothesis \eqref{Ric12} implies $Lf<0$. Now, if $f$ reaches a minimum, we have by the strong maximum principle that $f$ is constant, which contradicts \eqref{Ric12}.
\end{remark}

\subsection{Proof of Theorem \ref{thmCpt}}
\begin{proof}
Assume that $B^n\times_{f}F^m$, $m>1$, is a gradient Ricci soliton with $Ric+\nabla^{2}\tilde{\varphi}=\lambda g$. As in the proof of Theorem \ref{mainthm} we have $^F\!Ric=\mu g_{F}$, where the constant $\mu$ is given by \eqref{Ric12} or equivalently
\begin{equation*}
\mu=\lambda f^{2}+fLf+(m-1)|\nabla f|^{2}.
\end{equation*}
By integration
\begin{equation*}
\mu\,\mathrm{vol}_\varphi(B^n)=\lambda\int_{B^n}f^2e^{-\varphi}\mathrm{dB} +(m-2)\int_{B^n}|\nabla f|^{2}e^{-\varphi}\mathrm{dB}.
\end{equation*}
Since $\lambda>0$ and $m>1$ we conclude that $\mu>0$ and so  $F^m$ is compact by the Bonnet-Myers Theorem. Thus, $B^n\times F^m$ is a compact manifold.
\end{proof}

\begin{remark}
Notice that Theorem \ref{thmCpt} has the following alternative proof. Recall first that if both $B^n$ and $F^m$ are complete Riemannian manifolds then $M=B^n\times_{f}F^m$ is complete for every warping function $f$. Since $\nabla\tilde\varphi\in\mathfrak{L}(B)$ and $B^n$ is compact we must have that $|\nabla\tilde\varphi|$ is bounded on the shrinking Ricci soliton $(M,g,\nabla\tilde\varphi,\lambda)$. Therefore, we can apply Theorem 1 in \cite{FLGR} to affirm that $M$ is compact manifold.
\end{remark}

\begin{remark}\label{remark3}
It is known that non-trivial compact Ricci solitons only exist in dimensions $k\geq4$ (see \cite{ham3,Ivey} or \cite{cao}). In particular, any shrinking gradient Ricci soliton warped product with compact base of dimension one and fiber with dimension two must be trivial, since by Theorem \ref{thmCpt} it is compact.
\end{remark}

It now arises the following  natural question: {\it Is it possible to construct a gradient Ricci soliton warped product with compact base and non-constant warping function?} Corollaries \ref{corCpt} and \ref{corCpt2} give a partial answer to this question.

\begin{corollary}\label{corCpt}
It is not possible to construct a gradient Ricci soliton warped product with compact base and non-constant warping function, so that its fiber is a Riemannian manifold of dimension at least two and of non-positive scalar curvature.
\end{corollary}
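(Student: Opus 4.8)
The plan is to argue by contradiction, reducing everything to the results already established above. Suppose that $(M=B^n\times_f F^m,g,\nabla\tilde\varphi,\lambda)$ is a gradient Ricci soliton with $B^n$ compact, $m\geq 2$, warping function $f$ non-constant, and the scalar curvature $^{F}\!R$ of the fiber non-positive everywhere. First I would apply Proposition \ref{Lem2} together with the first assertion of Proposition \ref{PP2}: these show that $(B^n,g_B,f,\varphi)$ satisfies both equations in \eqref{EQMthm}, so Proposition \ref{PP5} applies and tells us that
$$\mu:=\lambda f^{2}+f\Delta f+(m-1)|\nabla f|^{2}-f\nabla\varphi(f)$$
is a real constant, while the second assertion of Proposition \ref{PP2} gives $^{F}\!Ric=\mu g_F$. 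Tracing this last identity over the $m$-dimensional fiber yields $^{F}\!R=m\mu$, so the hypothesis $^{F}\!R\leq 0$ is equivalent to $\mu\leq 0$.

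Next I would split according to the sign of $\lambda$. If $\lambda\leq 0$, i.e. the soliton is steady or expanding, then since $B^n$ is compact the continuous function $f$ attains both a maximum and a minimum on $B^n$, so the hypotheses of Theorem \ref{mainthm} are met (recall $m\geq 2$); hence $M$ must be a Riemannian product, that is, $f$ is constant, contradicting our standing assumption. Note that in this case the curvature hypothesis on the fiber is not even needed.

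It remains to rule out the shrinking case $\lambda>0$, and this is the one place where $^{F}\!R\leq 0$ is genuinely used, since Theorem \ref{mainthm} covers only $\lambda\leq 0$. Here I would reproduce the weighted integration argument from the proof of Theorem \ref{thmCpt}: writing $L:=\Delta-\nabla\varphi$ for the drift Laplacian of $B^n$, the defining relation of $\mu$ becomes $\mu=\lambda f^{2}+fLf+(m-1)|\nabla f|^{2}$, and integrating over the closed manifold $B^n$ against the measure $e^{-\varphi}\mathrm{dB}$, together with the self-adjointness identity $\int_{B^n}fLf\,e^{-\varphi}\mathrm{dB}=-\int_{B^n}|\nabla f|^{2}e^{-\varphi}\mathrm{dB}$, gives
$$\mu\,\mathrm{vol}_\varphi(B^n)=\lambda\int_{B^n}f^{2}e^{-\varphi}\mathrm{dB}+(m-2)\int_{B^n}|\nabla f|^{2}e^{-\varphi}\mathrm{dB}.$$
Since $m\geq 2$, $f>0$ and $\lambda>0$, the right-hand side is strictly positive, so $\mu>0$, whence $^{F}\!R=m\mu>0$, contradicting $^{F}\!R\leq 0$. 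This exhausts all cases, so no such soliton exists.

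I do not anticipate a serious obstacle: the statement is essentially a repackaging of Theorems \ref{mainthm} and \ref{thmCpt} through Propositions \ref{PP2} and \ref{PP5}. The only points deserving care are, first, verifying that the hypotheses of Proposition \ref{PP5} are actually in force — which is exactly the content of Propositions \ref{Lem2} and \ref{PP2} — and, second, justifying the integration by parts in the shrinking case, which is legitimate because $B^n$ is closed and $f,\varphi$ are smooth. It is also worth recording that $M$ is automatically complete (a warped product of complete factors is complete for any warping function), so that "gradient Ricci soliton" is not a vacuous hypothesis, and that the conclusion may be restated as: every gradient Ricci soliton warped product with compact base, fiber of dimension at least two and of non-positive scalar curvature has constant warping function, hence is a Riemannian product.
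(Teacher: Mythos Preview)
Your argument is correct and follows essentially the same route as the paper: you split on the sign of $\lambda$, dispatch the case $\lambda\leq 0$ via Theorem~\ref{mainthm} (using compactness of $B$ so that $f$ attains its extrema), and handle the case $\lambda>0$ by the weighted integration identity from the proof of Theorem~\ref{thmCpt} to force $\mu>0$. The paper's own proof is just a two-line compression of exactly this, citing Theorem~\ref{mainthm} and then ``as in the proof of Theorem~\ref{thmCpt}''; your version merely unpacks the references to Propositions~\ref{Lem2}, \ref{PP2} and \ref{PP5} explicitly.
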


\begin{proof}
Suppose that $M^k$ is a gradient Ricci soliton warped product with compact base and non-constant warping function having as fiber a Riemannian manifold of dimension at least two. By Theorem \ref{mainthm}, $M^k$ must be shrinking Ricci soliton. Hence, as in the proof of Theorem \ref{thmCpt} its fiber must be an Einstein manifold with positive constant scalar curvature.
\end{proof}

\subsection{Proof of Theorem \ref{CRSPW}}
\begin{proof}
By the hypotheses on $f$ and $\varphi$ we can conclude from Proposition \ref{PP5} that any $\mu$ given by \eqref{CMthmIntr} is constant. Now, taking an Einstein manifold $(F^m,g_F)$ with Ricci tensor $^F\!Ric=\mu g_F$, we can consider the warped product $(B^n\times_fF^m, g)$ with $g=\pi^*g_B+(f\circ\pi)^2\sigma^*g_F$. Notice that this manifold has a structure of Ricci soliton. In fact, we observe that it follows from $H^\varphi(Y,Z)=\nabla^2\tilde\varphi(Y,Z)$, $H^f(Y,Z)=\nabla^2\tilde f(Y,Z)$, part $(i)$ of Lemma \ref{oneil2} and the hypothesis \eqref{EQMthmIntr} that the fundamental equation
\begin{equation*}
Ric+\nabla^2\tilde\varphi=\lambda g
\end{equation*}
is satisfied for all $Y,Z\in\mathfrak{L}(B)$. When $Y\in\mathfrak{L}(B)$ and $V\in\mathfrak{L}(F)$ we use  $\nabla\tilde\varphi\in\mathfrak{L}(B)$ and part $(i)$ of Lemma \ref{oneil1} to verify that $\nabla^2\tilde\varphi(Y,V)=g(D_Y\nabla\tilde\varphi,V)=0$. So, by part $(ii)$ of Lemma \ref{oneil2}, the fundamental equation is again satisfied.

Finally, for $V,W\in\mathfrak{L}(F)$ we have by definition of $\mu$ and part $(iii)$ of Lemma \ref{oneil2} that
\begin{eqnarray}
\nonumber Ric(V,W)&=&\mu g_F(V,W)-(f\Delta f+(m-1)|\nabla f|^2)g_F(V,W)\\
\nonumber &=&(\lambda f^2-f\nabla\varphi(f))g_F(V,W)\\
\label{eqAux2thm2}&=& (\lambda -\frac{1}{f}\nabla\varphi(f))g(V,W)).
\end{eqnarray}
On the other hand, equation \eqref{eqAux1Thm2}  gives us
\begin{equation}\label{eqAux3thm2}
\nabla^{2}\tilde{\varphi}(V,W) = f\nabla\varphi(f)g_{F}(V,W)=\frac{1}{f}\nabla\varphi(f)g(V,W).
\end{equation}
Combining equations \eqref{eqAux2thm2} and \eqref{eqAux3thm2} we conclude that the fundamental equation is again satisfied, which completes the proof of Theorem \ref{CRSPW}.
\end{proof}

As an application we construct the following class of expanding Ricci solitons. Recall first that the Gaussian soliton is the Euclidean space $\Bbb{R}^n$ endowed with its standard metric $g_\circ$ and the potential function $\psi(x)=\frac{\lambda}{2}|x|^2$. Moreover, let us consider on $\Bbb{R}^n$ the metric $\bar{g}=\frac{1}{\rho^2}g_\circ$, where $\rho$ is a positive smooth function on $\Bbb{R}^n$. We need to find two smooth functions $f>0$ and $\varphi$ on $\Bbb{R}^n$ and a constant $\lambda$ satisfying a Ricci-Hessian type equation on $(\Bbb{R}^n,\bar{g})$, i.e.,
\begin{equation}\label{eq1}
Ric_{\bar{g}}+\bar\nabla^2\varphi=\lambda\bar{g}+\frac{m}{f}\bar\nabla^2f,
\end{equation}
where $m>0$ is an integer. Considering the theoretical facts we obtain
\begin{equation}\label{eq2}
Ric_{\overline{g}}=\frac{1}{\rho^2}\big\{(n-2)\rho\nabla^2\rho+(\rho\Delta\rho-(n-1)|\nabla\rho|^2)g_\circ\big\},
\end{equation}
where the two summands appearing in the second term of this equation are calculated in the metric $g_\circ$. Moreover, for every $h\in C^\infty(\mathbb{R}^n)$ the following are valid
\begin{eqnarray*}
(\bar\nabla^2h)_{ij}&=&h_{x_ix_j}+\frac{\rho_{x_j}}{\rho}h_{x_i}+\frac{\rho_{x_i}}{\rho}h_{x_j}\quad \mbox{for}\quad i\neq j,\\
(\bar\nabla^2h)_{ii}&=&h_{x_ix_i}+2\frac{\rho_{x_i}}{\rho}h_{x_i}-\sum_k\frac{\rho_{x_k}}{\rho}h_{x_k}\quad \mbox{for}\quad i=j.
\end{eqnarray*}
So we need to analyze equation \eqref{eq1} in two cases. For $i\neq j$, it rewrites as
\begin{equation}\label{eq5}
(n-2)\frac{\rho_{x_ix_j}}{\rho}+\varphi_{x_ix_j}+\frac{\rho_{x_j}}{\rho}\varphi_{x_i}+\frac{\rho_{x_i}}{\rho}\varphi_{x_j}
=\frac{m}{f}\Big(f_{x_ix_j}+\frac{\rho_{x_j}}{\rho}f_{x_i}+\frac{\rho_{x_i}}{\rho}f_{x_j}\Big)
\end{equation}
and for $i=j$,
\begin{eqnarray}\label{eq6}
\nonumber &&(n-2)\frac{\rho_{x_ix_i}}{\rho}+\frac{1}{\rho}\sum_k\rho_{x_kx_k}-\frac{(n-1)}{\rho^2}\sum_k\rho_{x_k}^2+\varphi_{x_ix_i}+2\frac{\rho_{x_i}}{\rho}\varphi_{x_i}\\
&=&\frac{1}{\rho}\sum_k\rho_{x_k}\varphi_{x_k}+\frac{\lambda}{\rho^2}+\frac{m}{f}\Big(f_{x_ix_i}
+2\frac{\rho_{x_i}}{\rho}f_{x_i}-\frac{1}{\rho}\sum_k\rho_{x_k}f_{x_k}\Big).
\end{eqnarray}
We assume that the functions have the following dependencies $\rho=\rho(x_n)$, $f=f(x_n)$ and $\varphi=\varphi(y)$,
where $x=(y,x_n)\in\Bbb{R}^n$ and $y=(x_1,\ldots,x_{n-1}).$ So taking $i=n$ on \eqref{eq5} we have for all $j\neq n$
\begin{equation*}
\rho_{x_n}\varphi_{x_j}=0.
\end{equation*}
Again from \eqref{eq5}, for $i\neq n$, we obtain for all $i\neq j\neq n$,
\begin{equation}\label{eq6.1}
\varphi_{x_ix_j}=0.
\end{equation}
As we are interested in obtaining nontrivial solutions for \eqref{eq1}, we should consider $\rho$ constant. Thus, from \eqref{eq6} we get
\begin{equation}\label{eq7}
\varphi_{x_ix_i}-\frac{m}{f}f_{x_ix_i}=\frac{\lambda}{\rho^2}.
\end{equation}
Whence, for $i\neq n$, we have
\begin{equation}\label{eq8}
\varphi_{x_ix_i}=\frac{\lambda}{\rho^2}.
\end{equation}
Hence, from \eqref{eq6.1} and \eqref{eq8}, $\varphi$ it is well determined by
\begin{equation}\label{eq8.1}
(D^2\varphi)_{ij}=\frac{\lambda}{\rho^2}\delta_{ij},
\end{equation}
where $D^2\varphi$ stands for the Hessian of $\varphi$ calculated in the metric $\delta_{ij}$ on $\Bbb{R}^{n-1}$. Moreover, taking $i=n$ on \eqref{eq7}, we obtain
\begin{equation}\label{eq9}
f_{x_nx_n} + \frac{\lambda}{m\rho^2}f=0.
\end{equation}
Therefore, by the theory of ODE's, it only remains for us to choose $\lambda<0$ in order to obtain
\begin{equation}\label{eq10}
f(x)=c_1e^{\frac{1}{\rho}\sqrt{\frac{-\lambda}{m}}x_n}+c_2e^{-\frac{1}{\rho}\sqrt{\frac{-\lambda}{m}}x_n}>0
\end{equation}
for every non-negative constants $c_1$ and $c_2$ which assure $f>0$.

The conclusion is that for every negative constant $\lambda$, the smooth functions $\varphi$ and $f$ respectively given by \eqref{eq8.1} and \eqref{eq10} satisfy a Ricci-Hessian type equation \eqref{eq1} on $(\Bbb{R}^n,\frac{1}{\rho^2}g_\circ)$, where $\rho>0$ is constant.

\begin{corollary}\label{CorCRSWPgeneral}
Let $\Bbb{R}^n$ be a Euclidean space with Euclidian metric and coordinates $x=(y,x_n)$, where $y=(x_1,\ldots,x_{n-1})$ and $n>1$. Consider a complete Riemannian manifold $(F^m,g_F)$ with Ricci tensor $^F\!Ric=\mu g_F$ and $m>1$. Then $(\Bbb{R}^n\times_fF^m,g, \nabla\tilde\varphi,\lambda)$ has a structure of expanding gradient Ricci soliton with $\mu\leq0$, where
\begin{equation}\label{eqAuxCor}
f(x)=c_1e^{\sqrt{\frac{-\lambda}{m}}x_n}+c_2e^{-\sqrt{\frac{-\lambda}{m}}x_n}>0 \quad \mbox{and} \quad \varphi(x)=\frac{\lambda}{2}|y|^2+\sum_{i=1}^{n-1}a_ix_i+b
\end{equation}
for any vector $a=(a_1,\ldots,a_{n-1})\in\Bbb{R}^{n-1}$, $b\in \Bbb{R}$, and every non-negative constants $c_1$ and $c_2$ which assure $f>0$.
\end{corollary}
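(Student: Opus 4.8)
The plan is to recognise this corollary as the special case $\rho\equiv 1$ of the construction carried out immediately before its statement, and to feed the resulting data into Theorem~\ref{CRSPW}. Concretely, on $B^n=\mathbb{R}^n$ with the flat metric $g_\circ$ one has $Ric_{g_\circ}=0$, and the functions in \eqref{eqAuxCor} are exactly the ones produced by the preceding analysis: $f=f(x_n)$ solves \eqref{eq9} with $\rho=1$, so in the standard coordinate frame $\nabla^2 f=\mathrm{diag}(0,\dots,0,-\tfrac{\lambda}{m}f)$, whence $\tfrac{m}{f}\nabla^2 f=\mathrm{diag}(0,\dots,0,-\lambda)$; while $\varphi=\varphi(y)$ has $\nabla^2\varphi=\lambda\,\mathrm{diag}(1,\dots,1,0)$. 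Adding these gives $Ric_{g_\circ}+\nabla^2\varphi=\lambda g_\circ+\tfrac{m}{f}\nabla^2 f$, i.e.\ the Ricci--Hessian equation \eqref{EQMthmIntr}. So the first step is just this bookkeeping.

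Next I would verify \eqref{EQMthmIntr2} by direct substitution. Since $\nabla f$ points in the $x_n$-direction and $\nabla\varphi$ lies in the $y$-directions, $\nabla\varphi(f)=0$, so the term $\tfrac{m}{f}\nabla\varphi(f)$ drops out; moreover $\Delta\varphi=(n-1)\lambda$, and expanding $|\nabla\varphi|^2=|\lambda y+a|^2$ against $2\lambda\varphi$ makes the $|y|^2$-terms and the linear terms cancel, leaving $2\lambda\varphi-|\nabla\varphi|^2=2\lambda b-|a|^2$. Hence the left-hand side of \eqref{EQMthmIntr2} equals the constant $c:=(n-1)\lambda+2\lambda b-|a|^2$, as required. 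By Proposition~\ref{PP5} the quantity $\mu$ defined by \eqref{CMthmIntr} is then automatically constant; evaluating it with $\Delta f=-\tfrac{\lambda}{m}f$, $|\nabla f|^2=(f')^2$ and $\nabla\varphi(f)=0$, the coefficients of $e^{\pm 2\sqrt{-\lambda/m}\,x_n}$ cancel and one is left with
\begin{equation*}
\mu=\frac{4(m-1)}{m}\,\lambda\,c_1c_2,
\end{equation*}
which is $\le 0$ because $\lambda<0$, $m>1$ and $c_1,c_2\ge 0$.

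Finally, since $\mathbb{R}^n$ is complete and $(F^m,g_F)$ is a complete Einstein manifold with $^F\!Ric=\mu g_F$ and $m>1$, all the hypotheses of Theorem~\ref{CRSPW} are in place, so $(\mathbb{R}^n\times_f F^m,g,\nabla\tilde\varphi,\lambda)$ is a gradient Ricci soliton warped product, and it is expanding since $\lambda<0$. I do not expect any genuine analytic obstacle here; the only point that needs a sentence is the compatibility between the prescribed Einstein constant of the fiber and the value of $\mu$ forced by \eqref{CMthmIntr}. As $c_1c_2$ runs over $[0,\infty)$ (with $(c_1,c_2)\neq(0,0)$ to keep $f>0$), the expression $\tfrac{4(m-1)}{m}\lambda\,c_1c_2$ sweeps out all of $(-\infty,0]$, so every fiber with $\mu\le 0$ is realised by a suitable choice of the constants in \eqref{eqAuxCor}; this bookkeeping, rather than any difficulty, is the ``hard part'', and it also accounts for the restriction $\mu\le 0$ in the statement.
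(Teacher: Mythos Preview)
Your proposal is correct and follows essentially the same route as the paper: set $\rho=1$ in the preceding construction, verify \eqref{EQMthmIntr} and \eqref{EQMthmIntr2} (the paper records the same constant $c=2\lambda b-|a|^2+(n-1)\lambda$), compute $\mu=\tfrac{4(m-1)}{m}\lambda\,c_1c_2\le 0$ from \eqref{CMthmIntr}, and invoke Theorem~\ref{CRSPW}. Your additional remark that $c_1c_2$ can be chosen to realise any prescribed $\mu\le 0$ is a useful clarification not made explicit in the paper.
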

\begin{proof}
Taking $\rho=1$ in \eqref{eq8.1} and \eqref{eq10}, we can consider the functions in \eqref{eqAuxCor}
satisfying equation \eqref{EQMthm} for $c=2\lambda b-|a|^2+\lambda(n-1)$. Consequently, from equation \eqref{CMthm} we compute
\begin{equation*}
\mu = \lambda f^2-\frac{\lambda}{m}f^2+(m-1)f_{x_n}^2 =(m-1)\Big(\frac{\lambda}{m}f^2+f_{x_n}^2\Big).
\end{equation*}
A straightforward calculation shows
\begin{equation*}
\mu=4\lambda c_1c_2\frac{(m-1)}{m}\leq0.
\end{equation*}
The conclusion of the corollary now immediately follows from Theorem \ref{CRSPW}.
\end{proof}

\section{Concluding remarks}

According to Corollary \ref{corCpt} the compactness of the base of a Ricci soliton warped product implies restrictions on its existence. For this reason, we establish the compactness criterion below, whose demonstration follows the same technique used by Fern\'andez-L\'opez and Garc\'ia-R\'io \cite{FLGR}.

\begin{proposition}\label{crit-Cpt}
Let $(B,g)$ be a complete Riemannian manifold satisfying
\begin{equation}\label{compacta}
Ric+\nabla^2\varphi-\frac{m}{f}\nabla^2f\geq cg
\end{equation}
for $m>0$, some smooth functions $\varphi,f$ on $B$ and some positive constant $c$. Then, $B$ is compact provided that $|\nabla\varphi|$ and $|\nabla(\ln f)|$ are both bounded on $(B,g)$. In particular, the fundamental group $\pi_1(B)$ is finite.
\end{proposition}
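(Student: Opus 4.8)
The plan is to compare the hypothesis \eqref{compacta} with a Bakry--Émery type Ricci tensor and then invoke a Bonnet--Myers style argument adapted to a drift Laplacian, exactly in the spirit of Fern\'andez-L\'opez and Garc\'ia-R\'io \cite{FLGR}. First I would rewrite the left-hand side of \eqref{compacta} in a form that isolates a genuine Ricci tensor plus a full Hessian of a single function. Using the identity \eqref{Eq2AuxEx1}, namely $\nabla^2\ln f=\frac{1}{f}\nabla^2f-\frac{1}{f^2}df\otimes df$, we have
\begin{equation*}
Ric+\nabla^2\varphi-\frac{m}{f}\nabla^2f=Ric+\nabla^2(\varphi-m\ln f)-m\,d(\ln f)\otimes d(\ln f),
\end{equation*}
so that setting $\psi:=\varphi-m\ln f$ the assumption \eqref{compacta} becomes $Ric+\nabla^2\psi\geq cg+m\,d(\ln f)\otimes d(\ln f)\geq cg$, since $m>0$ and $d(\ln f)\otimes d(\ln f)$ is positive semidefinite. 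Thus $(B,g)$ satisfies $Ric^\psi_\infty:=Ric+\nabla^2\psi\geq cg$ with $c>0$, i.e. it is a complete manifold with $\infty$-Bakry--Émery Ricci tensor bounded below by a positive constant.

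Next I would observe that $|\nabla\psi|$ is bounded on $(B,g)$: indeed $\nabla\psi=\nabla\varphi-m\nabla\ln f$, and by hypothesis both $|\nabla\varphi|$ and $|\nabla\ln f|=|\nabla(\ln f)|$ are bounded, so $|\nabla\psi|\leq|\nabla\varphi|+m|\nabla\ln f|$ is bounded too. With these two facts in hand — $Ric+\nabla^2\psi\geq cg$, $c>0$, and $|\nabla\psi|$ bounded — the conclusion follows from the Myers-type theorem for the Bakry--Émery Ricci tensor due to Fern\'andez-L\'opez and Garc\'ia-R\'io (Theorem~1 in \cite{FLGR}; see also Wei--Wylie and Limoncu), which asserts that a complete Riemannian manifold with $Ric+\nabla^2\psi\geq cg$, $c>0$, and $|\nabla\psi|$ bounded is compact. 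Finiteness of $\pi_1(B)$ then follows by lifting the metric and the function $\psi$ to the universal cover $\widetilde B$: the inequality \eqref{compacta}, being tensorial, pulls back, the gradient bounds are preserved under the local isometry, so $\widetilde B$ is compact as well, forcing $\pi_1(B)=\pi_1(B)$ to be finite.

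The only genuine content beyond bookkeeping is the algebraic reshuffling into a single Hessian plus a positive-semidefinite correction term; everything after that is a citation of \cite{FLGR}. Accordingly, the main point to get right is to verify carefully that one does \emph{not} need any bound on $\psi$ itself, only on $|\nabla\psi|$ — which is precisely what the Fern\'andez-L\'opez--Garc\'ia-R\'io argument provides, since their proof estimates the drift Laplacian of the distance function along a minimizing geodesic and the drift term enters only through $\langle\nabla\psi,\gamma'\rangle$, controlled by $|\nabla\psi|$. If one prefers a self-contained argument, the alternative is to integrate the second variation of arc length against the weight $e^{-\psi}$ along a geodesic of length $L$, obtaining $\int_0^L\bigl((n-1)(\phi')^2-\mathrm{Ric}^\psi(\gamma',\gamma')\phi^2\bigr)e^{-\psi}\,dt\geq0$ for every admissible $\phi$ together with the boundary contribution of $\psi'$, and choosing $\phi$ a suitable sine function to force $L\leq\pi\sqrt{(n-1)/c}+(\text{const})\sup|\nabla\psi|/c$; I expect this estimate, rather than any topological step, to be where one must be most careful about constants.
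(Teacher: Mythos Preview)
Your proof is correct and rests on the same key identity the paper uses, namely $\frac{m}{f}\nabla^2f=m\nabla^2(\ln f)+m\,d(\ln f)\otimes d(\ln f)$, which lets one trade $\frac{m}{f}\nabla^2f$ for the Hessian of a single function plus a nonnegative term. The packaging differs slightly: you absorb both potentials into $\psi=\varphi-m\ln f$, obtain $Ric+\nabla^2\psi\geq cg$ with $|\nabla\psi|$ bounded, and then invoke \cite{FLGR} as a black box; the paper instead keeps $\varphi$ and $\ln f$ separate, restricts the inequality to a unit-speed geodesic $\gamma$, integrates to get $\int_0^t Ric(\gamma',\gamma')\,ds\geq ct-\text{(bounded terms)}$, and concludes via Ambrose's compactness criterion \cite{Ambrose}. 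Since the proof of Theorem~1 in \cite{FLGR} is precisely this geodesic-integration-plus-Ambrose argument, the two routes are equivalent; your version is a clean conceptual reduction, while the paper's is self-contained modulo Ambrose. The treatment of $\pi_1(B)$ via the universal cover is identical in both.
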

\begin{proof}
Let $p$ be a point in $B$ and consider any geodesic $\gamma:[0,+\infty)\rightarrow B$ emanating from $p$ and parameterized by arc length $s.$ From inequality \eqref{compacta} we have
\begin{eqnarray*}
Ric(\gamma',\gamma')&\geq&cg(\gamma',\gamma')-g(\nabla_{\gamma'}\nabla\varphi,\gamma')+\frac{m}{f}g(\nabla_{\gamma'}\nabla f,\gamma')\\
&=&c-\frac{d}{ds}g(\nabla\varphi,\gamma')+m\frac{d}{ds}g(\nabla(\ln f),\gamma') + \frac{m}{f^2}(\gamma'(f))^2.
\end{eqnarray*}
Thus, by integrating and using the Cauchy-Schwarz inequality, we get
\begin{eqnarray*}
\int_0^tRic(\gamma',\gamma')ds&\geq&ct+g(\nabla\varphi_p,\gamma'(0))-g(\nabla\varphi_{\gamma(t)},\gamma'(t)) -mg(\nabla(\ln f)_p,\gamma'(0))\\
& &-mg(\nabla(-\ln f)_{\gamma(t)},\gamma'(t))\\
&\geq&ct+g(\nabla\varphi_p,\gamma'(0))-|\nabla\varphi_{\gamma(t)}| - mg(\nabla(\ln f)_p,\gamma'(0))\\
&&-m|\nabla(\ln f)_{\gamma(t)}|.
\end{eqnarray*}
Since $|\nabla\varphi|$ and $|\nabla (\ln f)|$ are both bounded, we obtain
\begin{equation*}
\int_0^{+\infty}Ric(\gamma',\gamma')ds=+\infty.
\end{equation*}
So, the compactness of $(B,g)$ follows from the Ambrose's compactness criterion \cite{Ambrose}. Finally, observe that equation \eqref{compacta} will also hold true in the universal cover of $B$, which will imply the compactness of the latter, and thus $\pi_1(B)$ is finite.
\end{proof}

\begin{corollary}\label{corCpt2}
It is not possible to construct a gradient Ricci soliton warped product $M^k$ with compact base $B$ and non-constant warping function, so that its fiber is a Riemannian manifold of dimension at least two and $\pi_1(B)$ is not finite.
\end{corollary}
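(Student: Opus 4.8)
The plan is to argue by contradiction, using Theorem \ref{mainthm} to eliminate all but the shrinking case and then applying the compactness criterion of Proposition \ref{crit-Cpt}. So I would begin by supposing that there exists a gradient Ricci soliton warped product $M^k=B^n\times_f F^m$ with $B$ compact, $f$ non-constant, fiber dimension $m\geq 2$, and $\pi_1(B)$ infinite; as in the rest of the paper, the potential function is taken to be the lift $\tilde\varphi$ of a smooth function $\varphi$ on $B$.

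First I would note that, since $B$ is compact, the warping function $f$ attains both a maximum and a minimum. Hence Theorem \ref{mainthm} shows that if $\lambda\leq 0$ then $M$ is a Riemannian product, i.e. $f$ is constant, contradicting the hypothesis; so we must have $\lambda>0$, that is, the soliton is shrinking. Next, because $m>1$, Proposition \ref{PP2} gives the Ricci--Hessian type identity $^{B}\!Ric+\nabla^{2}\varphi-\frac{m}{f}\nabla^{2}f=\lambda g_{B}$ on the base. Since $\lambda>0$, this is precisely inequality \eqref{compacta} with the positive constant $c=\lambda$ and with $m\geq 2>0$.

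It then remains to check the boundedness hypotheses of Proposition \ref{crit-Cpt}, and this is exactly where compactness of $B$ is used: $B$ is complete, $|\nabla\varphi|$ is bounded because $\varphi$ is smooth on a compact manifold, and $f$ is continuous and strictly positive on a compact manifold, hence bounded away from $0$, so $\ln f$ is smooth with $|\nabla(\ln f)|$ bounded. Proposition \ref{crit-Cpt} then yields that $\pi_1(B)$ is finite, contradicting the standing assumption, which finishes the proof. I do not expect a real obstacle here: the argument is an assembly of two results already in hand, and the only steps deserving a word of care are the reduction to $\lambda>0$ via Theorem \ref{mainthm} and the verification that compactness of $B$ supplies exactly the bounded-gradient hypotheses needed to feed into Proposition \ref{crit-Cpt}.
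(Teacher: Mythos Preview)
Your proposal is correct and follows essentially the same route as the paper: reduce to the shrinking case via Theorem~\ref{mainthm} (the paper does this by invoking the proof of Corollary~\ref{corCpt}), then feed the Ricci--Hessian identity from Proposition~\ref{PP2} into Proposition~\ref{crit-Cpt}, using compactness of $B$ to guarantee the required gradient bounds and thereby obtain finiteness of $\pi_1(B)$. The only difference is that you spell out explicitly the verification of the boundedness hypotheses, which the paper leaves implicit.
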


\begin{proof}
As in the proof of Corollary \ref{corCpt}, $M^k$ must be shrinking Ricci soliton warped product. Hence, by Propositions \ref{PP2} and \ref{crit-Cpt}, its base must be a Riemannian manifold with finite fundamental group.
\end{proof}

\textbf{Acknowledgements:} The authors would like to express their sincere thanks to D. Tsonev and E. Ribeiro Jr. for useful comments, discussions and constant encouragement. The authors also thank the referee for careful reading and useful comments which improved the paper. This work has been partially supported by CAPES-Brazil, FAPEAM-Brazil and CNPq-Brazil.

\end{document}